\newtheorem{theorem}{Theorem}[section]
\newtheorem{corollary}[theorem]{Corollary}
\theoremstyle{definition}
\newtheorem{definition}[theorem]{Definition}
\newtheorem{example}[theorem]{Example}
\theoremstyle{remark}
\newtheorem{remark}[theorem]{Remark}
\numberwithin{equation}{section}
\def\DJ{\leavevmode\setbox0=\hbox{D}\kern0pt\rlap
 {\kern.04em\raise.188\ht0\hbox{-}}D}
\begin{document}
\title[$\mathcal{JH\Im }$-suboperator pairs with application to invariant
approximation.... ]{$\mathcal{JH\Im }$-suboperator pairs with application to
invariant approximation by using $C$-class functions}
\author[B.Moeini,A.H.Ansari,P.Kumam]{Bahman Moeini$^{1}$,Arslan Hojat Ansari$%
^{2}$,Poom Kumam$^{3,4,5\ast }$}
\address{$^{1}$Department of Mathematics, Hidaj Branch, Islamic Azad
University, Hidaj, Iran}
\email{\textcolor[rgb]{0.00,0.00,0.84}{moeini145523@gmail.com}}
\address{$^{2}$ Department of Mathematics, Karaj Branch, Islamic Azad
University, Karaj, Iran}
\email{\textcolor[rgb]{0.00,0.00,0.84}{analsisamirmath2@gmail.com}}
\address{$^{3}$KMUTT-Fixed Point Research Laboratory, Department of
Mathematics, Room SCL 802 Fixed Point Laboratory, Science Laboratory
Building, Faculty of Science, King Mongkut's University of Technology
Thonburi (KMUTT), 126 Pracha Uthit Road, Bang Mod, Thung Khru, Bangkok
10140, Thailand. \\
$^{4}$KMUTT-Fixed Point Theory and Applications Research Group, Theoretical
and Computational Science (TaCS) Center, Science Laboratory Building,
Faculty of Science, King Mongkuts University of Technology Thonburi (KMUTT),
126 Pracha Uthit Road, Bang Mod, Thung Khru, Bangkok 10140, Thailand. \\
$^{5}$Department of Medical Research, China Medical University Hospital,
China Medical University, Taichung 40402, Taiwan.}
\email{\textcolor[rgb]{0.00,0.00,0.84}{ poom.kum@kmutt.ac.th  }}
\subjclass[2010]{Primary 47H10; Secondary 54H25.}
\keywords{Common fixed point, $C$-class function, generalized $\mathcal{JH}$%
-operator pair, $\mathcal{JH\Im }$-suboperator pair, invariant approximation.%
}
\date{Received: xxxxxx; Revised: yyyyyy; Accepted: zzzzzz. \\
\indent $^{*}$ Corresponding author}

\begin{abstract}
In this paper, by using $C$\textit{-class} functions some results and common
fixed point theorems are established for generalized $\mathcal{JH}$-operator
pairs of Sintunavarat and Kumam (%
\newblock {\it Journal of Inequalities and
Applications}, 67 (2011), 10 pages, doi:10.1186/1029-242X-2011-67) . Also, a
new class of non-commuting self-mappings as $\mathcal{JH\Im }$-suboperator
pairs are introduced. Final, as applications, several invariant
approximation results are discussed
\end{abstract}

\maketitle

\setcounter{page}{1}

\centerline{}

\centerline{}

\section{\textbf{Introduction }}

Generalizing Banach contraction principle in various ways has been the focus
of vigorous research activity and has been studied by many authors such as
Beg and Abbas \cite{IMA}, Sumitra et al. \cite{RVR} and Hussain et al. \cite%
{NVS}.

In 2010, Pathak and Hussain \cite{BP} introduced classes of non-commuting
self-mappings satisfying generalized $I$-contraction or $I$-nonexpansive
type condition and established the existence of common fixed point results
and applications for these mappings that called them $\mathcal{P}$-operator
pairs. In 2011, Hussain et al. \cite{AK} extended two classes of
non-commuting self-mappings, these classes contain the occasionally weakly
compatible as $\mathcal{JH}$-operator pairs and weakly biased self-mappings.

In 1963, Meinardus introduced the existence of invariant approximation using
fixed point theorem which was generalized by Brosowski in 1969. As
application related results on best approximation are derived by Hussain et
al. \cite{NVS}, \cite{NHBE} for weakly compatible, generalized $\phi $%
-contractions and $C_{q}$-commuting mappings, Beg and Abbas \cite{IMA},
Pathak and Hussain \cite{BP} for $\mathcal{P}$-operator pairs and
Sintunavarat and Kumam \cite{WSPK} for classes of $\mathcal{JH}$-operator
pairs. Also, for more informations of this line of researchs see \cite{MAJK,
MMAK, MAN, BO, NMK, NHVB}

In this presented work, generalized $\mathcal{JH}$-operator pairs which
introduced in \cite{WSPK} are compared with Banach operator pairs and by
using $C$\textit{-class} functions some practical common fixed point
theorems are proved. Then, new class of non-commuting self-mappings in a
normed space as class of $\mathcal{JH\Im}$-suboperator pairs are introduced
. Final, the results are used to some invariant and best approximation
problems. 

\section{Preliminaries}

Let $X$ be a normed space and $M$ a nonempty subset of $X$. $cl(A)$ and $%
wcl(A)$ are used to denote the closure and the weak closure of a set $A$,
respectively. The set of fixed points of $T$ is denoted by Fix$(T)$. A point 
$x\in M$ is a coincidence point (common fixed point) of $S$ and $T$ if $%
Sx=Tx (x=Sx=Tx)$. Let $C(S,T), PC(S,T)$ denote the sets of all coincidence
points, points of coincidence, respectively, of the pair $(S,T)$. The set $%
P_{M}(u)=\{x\in M: \Vert x-u\Vert= dist(u,M)\}$ is called set of best
approximations to $u\in X$ out of $M$, where $dist(u,M)=inf\{\Vert y-u\Vert:
y\in M\}$. Let $C^{S}_{M}(u)=\{x\in M:Sx\in P_{M}(u)\}$. The set $M$ is
called $q$-starshaped with $q\in M$ if the segment $[q,x]=\{(1-k)q+kx:0\leq
k\leq1\}$ joining $q$ to $x$ is contained in $M$ for all $x\in M$. A Banach
space $X$ satisfies Opial's condition if for every sequence $\{x_{n}\}$ in $%
X $ weakly convergent to $x\in X$, the inequality

\begin{equation*}
\liminf_{n\rightarrow\infty}\Vert
x_{n}-x\Vert<\liminf_{n\rightarrow\infty}\Vert x_{n}-y\Vert
\end{equation*}
holds for all $y\neq x$. We denote diameter of a set $A$ as $\delta(A)$.

Let $M$ is a subset of a normed space $X$ and $S,T:M\rightarrow M$, then $%
(S,T)$ is called:

\begin{itemize}
\item[(a)] $\mathcal{P}$-operator pair \cite{BP} if $\Vert x-Tx\Vert\leq
\delta(C(S,T))$ for some $x\in C(S,T)$;

\item[(b)] $\mathcal{JH}$-operator pair \cite{AK} if there exists a point $%
w=Sx=Tx $ in $PC(S,T)$ such that 
\begin{equation*}
\Vert w-x\Vert\leq \delta(PC(S,T));
\end{equation*}

\item[(c)] Generalized $\mathcal{JH}$-operator pair with order $n$ \cite%
{WSPK} if there exists a point $w=Sx=Tx$ in $PC(S,T)$ such that 
\begin{equation*}
\Vert w-x\Vert\leq (\delta(PC(S,T)))^{n}
\end{equation*}

for some $n\in \mathbb{N}$.

Note that a $\mathcal{JH}$-operator pair $(S,T)$ is generalized $\mathcal{JH}
$-operator pair with order $n$. But the converse is not true in generally,
(see \cite{WSPK});

\item[(d)] Banach operator pair if $S(\text{Fix}(T))\subseteq \text{Fix}(T)$;

\item[(e)] Symmetric Banach operator pair if $S(\text{Fix}(T))\subseteq 
\text{Fix}(T)$ and $T(\text{Fix}(S))\subseteq \text{Fix}(S)$.
\end{itemize}

\begin{definition}
\label{d2.1}\cite{WSPK} Let $M$ be a $q$-starshaped subset of a normed space 
$X$ and $S,T$ self-mappings of a normed space $M$. The order pair $(S,T)$ is
called a generalized $\mathcal{JH}$-suboperator with order $n$ if for each $%
k\in [0,1]$, $(S,T_{k})$ is a generalized $\mathcal{JH}$-operator with order 
$n$ that is, for $k\in [0,1]$ there exists a point $w=Sx=T_{k}x$ in $%
PC(S,T_{k})$ such that

\begin{equation*}
\Vert w-x\Vert\leq (\delta(PC(S,T_{k})))^{n}
\end{equation*}
for some $n\in \mathbb{N}$, where $T_{k}$ is self-mapping of $M$ such that $%
T_{k}x=(1-k)q+kTx$ for all $x\in M$.
\end{definition}

A mapping $T:M\rightarrow M$ is called

\begin{itemize}
\item[(1)] Hemicompact if any sequence $\{x_{n}\}$ in $M$ has a convergent
subsequence whenever $\Vert x_{n}-Tx_{n}\Vert\rightarrow 0$ as $%
n\rightarrow\infty$;

\item[(2)] Completely continuous if $\{x_{n}\}$ converges weakly to $x$
which implies that $\{Tx_{n}\}$ converges strongly to $Tx$;

\item[(3)] Demiclosed at $0$ if for every sequence $\{x_{n}\}$ in $M$
converging weakly to $x$ and $\{Tx_{n}\}$ converges at $0\in X$, then $Tx=0$.
\end{itemize}

Throughout this paper let $\phi:\mathbb{R^{+}\rightarrow \mathbb{R^{+}}}$ is
a Lebesgue integrable mapping which is summable, nonnegative and for all $%
\epsilon>0$,

\begin{equation}
\int_{0}^{\epsilon}\phi(t)dt >0,  \label{eq2.1}
\end{equation}

and $\gamma : (0,\infty)\rightarrow (0,\infty)$ is a nondecreasing function
satisfying the condition $\gamma(t) < t$, for each $t > 0$.

\begin{equation}
m_{1}(x,y)=\max\Big\{\Vert Sx-Sy\Vert,\Vert Sx-Tx\Vert,\Vert Sy-Ty\Vert,%
\frac{1}{2} [\Vert Tx-Ty\Vert+\Vert Sy-Tx\Vert]\Big\},  \label{eq2.2}
\end{equation}

\begin{equation}
n_{1}(x,y)=\max\Big\{\Vert Sx-Sy\Vert, \Vert Tx-Sx\Vert,\Vert Sy-Ty\Vert%
\Big\}  \label{eq2.3}
\end{equation}
\begin{equation}
m_{2}(x,y)=\max\Big\{\Vert Sx-Sy\Vert,\Vert Sx-Tx\Vert,\Vert Sy-Ty\Vert, 
\frac{1}{2}[\Vert Sx-Ty\Vert+\Vert Sy-Tx\Vert]\Big\},  \label{eq2.4}
\end{equation}

\begin{equation}
m_{3}(x,y)=\max\Big\{\Vert Sx-Sy\Vert,d(Sx,[q,Tx]),d(Sy,[q,Ty]), \frac{1}{2}%
[d(Sx,[q,Ty])+d(Sy,[q,Tx])]\Big\}.  \label{eq2.5}
\end{equation}

Let $\Im=\{f_{\alpha}\}_{\alpha\in M}$ be a family of functions from $[0,1]$
into $M$, having the property that for each $\alpha\in M$ we have $%
f_{\alpha}(1)=\alpha$.

\begin{equation}
\begin{array}{rl}
m_{4}(x,y)=max\Big\{\Vert Sx-Sy\Vert & 
,dist(Sx,Y_{f_{Tx}(0)}^{Tx}),dist(Sy,Y_{f_{Ty}(0)}^{Ty}), \frac{1}{2}%
[dist(Sx,Y_{f_{Ty}(0)}^{Ty}) \\ 
& +dist(Sy,Y_{f_{Tx}(0)}^{Tx})]\Big\},%
\end{array}
\label{eq2.6}
\end{equation}

where, $Y_{f_{Tx}(0)}^{Tx}=\{f_{Tx}(k) : k\in [0,1]\}$.

\begin{definition}
\label{C-class}\cite{aha} A mapping $F:[0,\infty )^{2}\rightarrow \mathbb{R}$
is called $C$\textit{-class} function if it is continuous and satisfies
following axioms:

(1) $F(s,t)\leq s$;

(2) $F(s,t)=s$ implies that either $s=0$ or $t=0$; for all $s,t\in \lbrack
0,\infty )$.
\end{definition}

Note for some $F$ we have that $F(0,0)=0$.

We denote $C$-class functions \ as $\mathcal{C}$.

\begin{example}
\label{C-class examp}\cite{aha} The following functions $F:[0,\infty
)^{2}\rightarrow \mathbb{R}$ are elements of $\mathcal{C}$, for all $s,t\in
\lbrack 0,\infty )$:

(1) $F(s,t)=s-t$, $F(s,t)=s\Rightarrow t=0$;

(2) $F(s,t)=ms$, $0{<}m{<}1$, $F(s,t)=s\Rightarrow s=0$;

(3) $F(s,t)=\frac{s}{(1+t)^{r}}$; $r\in (0,\infty )$, $F(s,t)=s$ $%
\Rightarrow $ $s=0$ or $t=0$;

(4) $F(s,t)=\log (t+a^{s})/(1+t)$, $a>1$, $F(s,t)=s$ $\Rightarrow $ $s=0$ or 
$t=0$;

(5) $F(s,t)=\ln (1+a^{s})/2$, $a>e$, $F(s,t)=s$ $\Rightarrow $ $s=0$;

(6) $F(s,t)=(s+l)^{(1/(1+t)^{r})}-l$, $l>1,r\in (0,\infty )$, $F(s,t)=s$ $%
\Rightarrow $ $t=0$;

(7) $F(s,t)=s\log _{t+a}a$, $a>1$, $F(s,t)=s\Rightarrow $ $s=0$ or $t=0$;

(8) $F(s,t)=s-(\frac{1+s}{2+s})(\frac{t}{1+t})$, $F(s,t)=s\Rightarrow t=0$;

(9) $F(s,t)=s\beta (s)$, $\beta :[0,\infty )\rightarrow \lbrack 0,1)$, and
is continuous, $F(s,t)=s\Rightarrow s=0$;

(10) $F(s,t)=s-\frac{t}{k+t},F(s,t)=s\Rightarrow t=0$;

(11) $F(s,t)=s-\varphi (s),F(s,t)=s\Rightarrow s=0,$here $\varphi :[0,\infty
)\rightarrow \lbrack 0,\infty )$ is a continuous function such that $\varphi
(t)=0\Leftrightarrow t=0$;

(12) $F(s,t)=sh(s,t),F(s,t)=s\Rightarrow s=0,$here $h:[0,\infty )\times
\lbrack 0,\infty )\rightarrow \lbrack 0,\infty )$is a continuous function
such that $h(t,s)<1$ for all $t,s>0$;

(13) $F(s,t)=s-(\frac{2+t}{1+t})t$, $F(s,t)=s\Rightarrow t=0$;
\end{example}

(14) $F(s,t)=\sqrt[n]{\ln (1+s^{n})}$, $F(s,t)=s\Rightarrow s=0$;

(15) $F(s,t)=\phi (s),F(s,t)=s\Rightarrow s=0,$here $\phi :[0,\infty
)\rightarrow \lbrack 0,\infty )$ is a continuous function such that $\phi
(0)=0,$ and $\phi (t)<t$ for $t>0;$

(16) $F(s,t)=\frac{s}{(1+s)^{r}}$; $r\in (0,\infty )$, $F(s,t)=s$ $%
\Rightarrow $ $s=0$ .

\begin{definition}
\cite{khan} A function $\psi :[0,\infty )\rightarrow \lbrack 0,\infty )$ is
called an altering distance function if the following properties are
satisfied:

$\left( i\right) $ $\ \psi $ is non-decreasing and continuous,

$\left( ii\right) $ $\psi \left( t\right) =0$ if and only if $t=0$.
\end{definition}

\begin{remark}
We denote $\ \Psi \ $ set of altering distance functions.
\end{remark}

\begin{definition}
\cite{aha} An\ ultra altering distance function is a continuous,
nondecreasing mapping $\varphi :[0,\infty )\rightarrow \lbrack 0,\infty )$
such that $\varphi (t)>0,$ $t>0$ and $\varphi (0)\geq 0$.
\end{definition}

\begin{remark}
We denote $\Phi _{u}$ set of ultra altering distance functions.
\end{remark}

\begin{definition}
A tripled $(\psi ,\varphi ,F)$ where $\psi \in \Psi ,$ $\varphi \in \Phi
_{u} $ and $F\in \mathcal{C}$ is said to be monotone if for any $x,y\in %
\left[ 0,\infty \right) $%
\begin{equation*}
x\leqslant y\Longrightarrow F(\psi (x),\varphi (x))\leqslant F(\psi
(y),\varphi (y)).
\end{equation*}

If for any $x,y\in \left[ 0,\infty \right) $%
\begin{equation*}
x< y\Longrightarrow F(\psi (x),\varphi (x))<F(\psi (y),\varphi (y)),
\end{equation*}
\end{definition}

then $(\psi ,\varphi ,F)$ is called strictly monotone.

\begin{example}
Let $F(s,t)=s-t,\varphi (x)=\sqrt{x}$\textrm{%
\begin{equation*}
\psi (x)=%
\begin{cases}
\sqrt{x} & \text{if }0\leq x\leq 1, \\ 
x^{2} & \text{if x}>1,%
\end{cases}%
\end{equation*}%
} then $(\psi ,\varphi ,F)$ is monotone.
\end{example}

\begin{example}
Let $F(s,t)=s-t,\varphi (x)=x^{2}$\textrm{%
\begin{equation*}
\psi (x)=%
\begin{cases}
\sqrt{x} & \text{if }0\leq x\leq 1, \\ 
x^{2} & \text{if x}>1,%
\end{cases}%
\end{equation*}%
} then $(\psi ,\varphi ,F)$ is not monotone.
\end{example}


\section{Some results and common fixed point theorems}

Sintunavarat and Kumam \cite{WSPK}, showed that every generalized $\mathcal{%
JH}$- operator pair need not be a Banach operator pair. In the following we
show that every symmetric Banach operator pair need not be a generalized $%
\mathcal{JH}$-operator pair, that is generalized $\mathcal{JH}$-operator
pair is different from symmetric Banach operator pair and Banach operator
pair.

\begin{example}
\label{ex3.1} Let $(X=\mathbb{R},\Vert .\Vert)$ is a normed space with usual
norm, and $M=[0,5]$ is a subset of $X$ define $S,T:M\rightarrow M$ by 
\begin{equation}  \label{eq3.7}
Sx= \left \{ 
\begin{array}{l}
1\ \ \ \ \ \ \ \ \ \ \ \ \ \ \ \ \ \ \ \ \ \ \ \ \ \ \ \ \ \ \ \ if\ x\in
[0,1), \\ 
2x-2\ \ \ \ \ \ \ \ \ \ \ \ \ \ \ \ \ \ \ \ \ \ \ \ \ if\ x\in [1,2], \\ 
2\ \ \ \ \ \ \ \ \ \ \ \ \ \ \ \ \ \ \ \ \ \ \ \ \ \ \ \ \ \ \ if\ x\in
(2,3), \\ 
-2x+9\ \ \ \ \ \ \ \ \ \ \ \ \ \ \ \ \ \ \ \ \ \ if\ x\in [3,4], \\ 
\frac{1}{2}+\sqrt{\frac{1}{2}-(x-\frac{9}{2})^{2}} \ \ \ \ \ \ \ \ \ if\
x\in (4,\frac{9}{2}], \\ 
\frac{1}{2}-\sqrt{\frac{1}{2}-(x-\frac{9}{2})^{2}}\ \ \ \ \ \ \ \ \ if\ x\in
(\frac{9}{2},5],%
\end{array}
\right. \ \ Tx=\left \{ 
\begin{array}{l}
0\ \ \ \ \ \ \ \ \ \ \ \ \ \ \ if\ x\in [0,1), \\ 
2x-1\ \ \ \ \ \ \ \ if\ x\in [1,2], \\ 
3\ \ \ \ \ \ \ \ \ \ \ \ \ \ if\ x\in (2,3), \\ 
-x+5\ \ \ \ \ \ \ if\ x\in [3,5].%
\end{array}
\right.
\end{equation}
Then, $C(S,T)=\{4,5\}$, $PC(S,T)=\{0,1\}$, $\text{Fix}(S)=\{2,3\}$ and $%
\text{Fix}(T)=\{0,1\}$. Clearly, $S(\text{Fix}(T))\subseteq \text{Fix}(T)$
and $T(\text{Fix}(S))\subseteq \text{Fix}(S)$, so $(S,T)$ is a symmetric
Banach operator pair but not a generalized $\mathcal{JH}$-operator pair, as

\begin{equation*}
\Vert 4-T4\Vert=3>1=(\delta(PC(S,T)))^{n},\ \text{and}\ \Vert
5-T5\Vert=5>1=(\delta(PC(S,T)))^{n},
\end{equation*}
for all $n\in \mathbb{N}$.
\end{example}

\begin{theorem}
\label{t3.2} Let $M$ is a subset of a normed space $X$ and $S,T:M\rightarrow
M$. Suppose the following conditions hold:

\begin{itemize}
\item[(i)] $(S,T)$ is a generalized $\mathcal{JH}$-operator pair with order $%
n_{0}\in \mathbb{N}$,

\item[(ii)] for each $x,y\in M$, 
\begin{equation}
\psi \Big(\int_{0}^{\Vert Sx-Ty\Vert }\phi (t)dt\Big)\leq F\Big(\psi
(\int_{0}^{m_{1}(x,y)}\phi (t)dt),\varphi (\int_{0}^{n_{1}(x,y)}\phi (t)dt)%
\Big),  \label{eq3.8}
\end{equation}
\end{itemize}

where $\psi \in \Psi, \varphi \in \Phi_{u}, F\in \mathcal{C}$, such that $%
(\psi ,\varphi ,F)$ is monotone. Then $(S,T)$ is a symmetric Banach operator
pair.
\end{theorem}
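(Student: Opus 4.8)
The plan is to show that hypothesis (ii) is in fact so strong that it forces $S=T$ on the whole of $M$; once that is established the symmetric Banach operator pair property drops out immediately, and hypothesis (i) is needed only to rule out the vacuous case.

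First I would substitute $x=y=u$ for an arbitrary $u\in M$ into \eqref{eq3.8} and simplify the two moduli. From \eqref{eq2.2} and \eqref{eq2.3} one checks that, since $\Vert Su-Su\Vert=0$ and $\Vert Tu-Tu\Vert=0$, every surviving entry of $m_{1}(u,u)$ and of $n_{1}(u,u)$ equals either $\Vert Su-Tu\Vert$ or $\tfrac12\Vert Su-Tu\Vert$, so that $m_{1}(u,u)=n_{1}(u,u)=\Vert Su-Tu\Vert$. Hence \eqref{eq3.8} becomes
\[
\psi\Big(\int_{0}^{\Vert Su-Tu\Vert}\phi(t)\,dt\Big)\leq F\Big(\psi\big(\textstyle\int_{0}^{\Vert Su-Tu\Vert}\phi\big),\,\varphi\big(\textstyle\int_{0}^{\Vert Su-Tu\Vert}\phi\big)\Big).
\]

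The key step is to force $\Vert Su-Tu\Vert=0$ from this. Put $a=\int_{0}^{\Vert Su-Tu\Vert}\phi(t)\,dt$, $s=\psi(a)$ and $r=\varphi(a)$; the inequality reads $s\leq F(s,r)$, while axiom (1) of Definition \ref{C-class} gives $F(s,r)\leq s$, so $F(s,r)=s$ and axiom (2) yields $s=0$ or $r=0$. In the first case $\psi(a)=0$ and the altering distance property forces $a=0$; in the second case $\varphi(a)=0$ and strict positivity of an ultra altering distance function on $(0,\infty)$ again forces $a=0$. Finally $a=0$ together with \eqref{eq2.1} (positivity of $\int_{0}^{\epsilon}\phi$ for every $\epsilon>0$) gives $\Vert Su-Tu\Vert=0$. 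Since $u$ was arbitrary, $S=T$ on $M$. This case analysis on the second $C$-class axiom — invoking the correct defining property of $\psi$, of $\varphi$, and of $\phi$ in each branch — is the only point that needs care; the rest is bookkeeping. (Observe that the monotonicity of $(\psi,\varphi,F)$ is not needed here.)

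It remains to read off the conclusion. With $S=T$ we have $\text{Fix}(S)=\text{Fix}(T)$, and for any $u\in\text{Fix}(T)$ we get $Su=Tu=u\in\text{Fix}(T)$, so $S(\text{Fix}(T))\subseteq\text{Fix}(T)$; by symmetry $T(\text{Fix}(S))\subseteq\text{Fix}(S)$, which is precisely the asserted property. Hypothesis (i) does not enter these inclusions directly; its function is to make them non-vacuous, since the same reasoning applied to two points of coincidence shows $PC(S,T)$ is a singleton, whence $\delta(PC(S,T))=0$ and the defining inequality $\Vert w-x_{0}\Vert\leq(\delta(PC(S,T)))^{n_{0}}=0$ produces a genuine common fixed point $x_{0}=Sx_{0}=Tx_{0}$.
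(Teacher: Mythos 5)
Your proof is correct, but it takes a genuinely different route from the paper's. The paper goes through the coincidence-point machinery: it uses (i) to produce $w=Sx=Tx$ with $\Vert w-x\Vert\leq(\delta(PC(S,T)))^{n_{0}}$, applies \eqref{eq3.8} to two distinct points of coincidence to show $PC(S,T)$ is a singleton (so $\delta(PC(S,T))=0$ and $x=w$ is a common fixed point), and only then uses the diagonal substitution $x=y=u$ --- restricted to $u\in\mathrm{Fix}(S)$, resp.\ $u\in\mathrm{Fix}(T)$ --- to prove $\mathrm{Fix}(S)=\mathrm{Fix}(T)$ and hence the symmetric Banach operator property. Your key observation is that this same diagonal substitution needs no restriction at all: since $m_{1}(u,u)=n_{1}(u,u)=\Vert Su-Tu\Vert$ for every $u\in M$, the two $C$-class axioms together with the defining properties of $\psi$, $\varphi$ and $\phi$ force $Su=Tu$ everywhere, after which the inclusions $S(\mathrm{Fix}(T))\subseteq\mathrm{Fix}(T)$ and $T(\mathrm{Fix}(S))\subseteq\mathrm{Fix}(S)$ are immediate. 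What your route buys: a shorter argument, the correct remark that monotonicity of $(\psi,\varphi,F)$ is never used (the paper's proof does not use it either, though it is hypothesized), and the observation that (i) is not needed for the stated conclusion but only to make $\mathrm{Fix}(S)\cap\mathrm{Fix}(T)$ nonempty. What the paper's route buys is the information recorded inside its proof --- existence and uniqueness of the common fixed point --- which is what the later theorems actually invoke. One consequence of your argument deserves emphasis: since (ii) forces $S=T$ (and, applied to distinct $x,y$, even forces $S$ to be constant), the hypothesis of Theorem \ref{t3.2} is degenerate, and in particular the paper's own Example \ref{ex3.4}, which exhibits $S\neq T$ allegedly satisfying \eqref{eq3.8}, cannot be right; indeed that example fails \eqref{eq3.8} at $x=y\neq 0$, where the left side equals $2(4x-2)^{2}$ while the right side equals $2(4x-2)^{2}-1$, a case the example dismisses without checking (``Case-$2$: $x=y$''). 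So your stronger conclusion is not a flaw in your reasoning but a genuine defect in the paper's hypotheses that your approach exposes.
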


\begin{proof}
By $(i)$ , there exists a point $w\in M$ such that $w=Sx=Tx$ and 
\begin{equation}
\Vert w-x\Vert \leq (\delta(PC(S,T)))^{n_{0}}.  \label{eq3.9}
\end{equation}%
First we show that $\delta(PC(S,T))=0$, for this, suppose there exists
another point $z\in M$ and $z\neq w$, for which $z=Sy=Ty$. Then from (\ref%
{eq3.8}), we get 
\begin{equation}
\begin{array}{rl}
\psi \Big(\int_{0}^{\Vert w-z\Vert }\phi (t)dt\Big)= & \psi \Big(%
\int_{0}^{\Vert Sx-Ty\Vert }\phi (t)dt\Big) \\ 
\leq & F\Big(\psi (\int_{0}^{\max \Big\{\Vert w-z\Vert ,0,0,\Vert w-z\Vert %
\Big\}}\phi (t)dt) \\ 
, & \varphi (\int_{0}^{\max \Big\{\Vert w-z\Vert ,0,0\Big\}}\phi (t)dt)\Big)
\\ 
= & F\Big(\psi (\int_{0}^{\Vert w-z\Vert }\phi (t)dt),\varphi
(\int_{0}^{\Vert w-z\Vert }\phi (t)dt)\Big),%
\end{array}
\label{eq3.10}
\end{equation}

which yields that $\psi (\int_{0}^{\Vert w-z\Vert }\phi (t)dt)=0$ \ or$\ \
\varphi (\int_{0}^{\Vert w-z\Vert }\phi (t)dt)=0$. Thus $\int_{0}^{\Vert
w-z\Vert }\phi (t)dt=0$,which is a contradiction . Hence, $PC(S,T)$ is
singleton and $\delta(PC(S,T))=0$.

By using (\ref{eq3.9}), $\Vert w-z\Vert \leq (\delta(PC(S,T)))^{n_{0}}=0 $,
thus $w=x$, that is $x$ is a common fixed point of $S$ and $T$. Again by
using relation (\ref{eq3.8}), it can be shown that $x$ is a unique common
fixed point of $S$ and $T$. Now, we prove that any fixed point of $S$ is a
fixed point of $T$ and conversely. Suppose that $u$ is a fixed point of $S$
but $u\neq Tu$, from (\ref{eq3.8}), we have 
\begin{equation}
\begin{array}{rl}
\psi \Big(\int_{0}^{\Vert u-Tu\Vert }\phi (t)dt\Big)= & \psi \Big(%
\int_{0}^{\Vert Su-Tu\Vert }\phi (t)dt\Big) \\ 
\leq & F\Big(\psi (\int_{0}^{\max \Big\{0,\Vert u-Tu\Vert ,\Vert u-Tu\Vert ,%
\frac{1}{2}\Vert u-Tu\Vert \Big\}}\phi (t)dt) \\ 
, & \varphi (\int_{0}^{\max \Big\{0,\Vert u-Tu\Vert ,\Vert u-Tu\Vert \Big\}%
}\phi (t)dt)\Big) \\ 
= & F\Big(\psi (\int_{0}^{\Vert u-Tu\Vert }\phi (t)dt), \varphi
(\int_{0}^{\Vert u-Tu\Vert }\phi (t)dt)\Big),%
\end{array}
\label{eq3.11}
\end{equation}

which yields that $\psi (\int_{0}^{\Vert u-Tu\Vert }\phi (t)dt)=0$ \ or $\ \
\delta (\int_{0}^{\Vert u-Tu\Vert }\phi (t)dt)=0$. Thus $\int_{0}^{\Vert
u-Tu\Vert }\phi (t)dt=0$,which is a contradiction, and so, $u=Tu$. By using
a similar argument, we conclude that every fixed point of $T$ is a fixed
point of $S$. Therefore, $\text{Fix}(S)=\text{Fix}(T)= \text{Fix}(S)\cap 
\text{Fix}(T)=\{x\}$, $S(\text{Fix}(T))\subseteq \text{Fix}(T)$ and $T(\text{%
Fix}(S))\subseteq \text{Fix}(S)$. Then $(S,T)$ is a symmetric Banach
operator pair.
\end{proof}

\begin{example}
\label{ex3.4} Let $X=\mathbb{R}$ and $\Vert . \Vert:X \rightarrow \mathbb{R}$
be given as

\textrm{%
\begin{equation*}
\Vert x-y\Vert=%
\begin{cases}
\vert x+y\vert & \text{if }x\neq y, \\ 
0 & \text{if }x=y,%
\end{cases}%
\end{equation*}%
}

then $(X,\Vert .\Vert)$ is a normed space.\newline
Suppose $M=\{0,2,4,6,...\}$. Let $F(s,t)=s-t$, $\psi:[0,\infty)\rightarrow
[0,\infty)$ be defined as

\begin{equation*}
\psi(t)=2t^{2}, \ \text{for}\ t\in[0,\infty).
\end{equation*}

Suppose $\varphi:[0,\infty)\rightarrow [0,\infty)$ be defined as

\textrm{%
\begin{equation*}
\varphi(s)=%
\begin{cases}
s & \text{if } s\leq 1, \\ 
1 & \text{if } s>1,%
\end{cases}%
\end{equation*}%
}

and $\phi: \mathbb{R^{+}}\rightarrow\mathbb{R^{+}}$ be defined as $\phi(t)=1$%
. Then $\psi\in \Psi$, $\varphi\in \Phi_{u}$, $F\in \mathcal{C}$ and $(\psi,
\varphi, F)$ is monotone. Define $S,T:M\rightarrow M$ as

\textrm{%
\begin{equation*}
Sx=%
\begin{cases}
2x & \text{if } x\neq 0, \\ 
0 & \text{if } x=0,%
\end{cases}%
\end{equation*}%
}

\textrm{%
\begin{equation*}
Tx=%
\begin{cases}
2x-2 & \text{if }x\neq 0, \\ 
0 & \text{if } x=0.%
\end{cases}%
\end{equation*}%
}

Now, the following cases are checked for $x,y\in M$.\newline
Case-$1:x\neq y$.

\begin{itemize}
\item[(i)] If $y\neq 0$ and $x>y$, then

\begin{equation*}
\begin{array}{rl}
& \psi\Big(\int_{0}^{\Vert Sx-Ty\Vert}\phi(t)dt\Big)=\psi(\Vert Sx-Ty\Vert)=
\psi(\Vert 2x-(2y-2)\Vert)=\psi(\vert 2x+2y-2\vert) \\ 
& =8(x+y-1)^{2},%
\end{array}%
\end{equation*}

\begin{equation*}
\begin{array}{rl}
& \psi \Big(\int_{0}^{m_{1}(x,y)} \phi(t)dt\Big) \\ 
& =\psi \Big(\max\Big\{\Vert Sx-Sy\Vert,\Vert Sx-Tx\Vert, \Vert Sy-Ty\Vert,%
\frac{1}{2}[\Vert Tx-Ty\Vert+\Vert Sy-Tx\Vert]\Big\}\Big) \\ 
& =\psi \Big(\max\Big\{\vert 2x+2y\vert,\vert 4x-2\vert,\vert
4y-2\vert,\vert 2x+2y-3\vert\Big\}\Big) \\ 
& =\psi(\vert 4x-2\vert)=2(4x-2)^{2} \\ 
& 
\end{array}%
\end{equation*}

and

\begin{equation*}
\begin{array}{rl}
& \varphi \Big(\int_{0}^{n_{1}(x,y)}\phi(t)dt\Big) \\ 
& =\varphi \Big(\max\Big\{\Vert Sx-Sy\Vert,\Vert Sx-Tx\Vert,\Vert Sy-Ty\Vert%
\Big\}\Big) \\ 
& =\varphi \Big(\max\Big\{\vert 2x+2y\vert,\vert 4x-2\vert,\vert 4y-2\vert%
\Big\}\Big) =\varphi (\vert 4x-2\vert)=1.%
\end{array}%
\end{equation*}

Since, $8(x+y-1)^{2}\leq 2(4x-2)^{2}-1$, so

\begin{equation*}
\begin{array}{rl}
\psi \Big(\int_{0}^{\Vert Sx-Ty\Vert }\phi (t)dt\Big) & \leq \psi
(\int_{0}^{m_{1}(x,y)}\phi (t)dt)-\varphi (\int_{0}^{n_{1}(x,y)}\phi (t)dt)
\\ 
& =F\Big(\psi (\int_{0}^{m_{1}(x,y)}\phi (t)dt),\varphi
(\int_{0}^{n_{1}(x,y)}\phi(t)dt) \Big),%
\end{array}%
\end{equation*}

then relation (\ref{eq3.8}) is established.

\item[(ii)] If $x\neq 0$ and $y>x$, then

\begin{equation*}
\begin{array}{rl}
& \psi\Big(\int_{0}^{\Vert Sx-Ty\Vert}\phi(t)dt\Big) =\psi(\Vert
Sx-Ty\Vert)=\psi(\Vert 2x-(2y-2)\Vert)=\psi(\vert 2x+2y-2\vert) \\ 
& =8(x+y-1)^{2},%
\end{array}%
\end{equation*}

\begin{equation*}
\begin{array}{rl}
& \psi \Big(\int_{0}^{m_{1}(x,y)} \phi(t)dt\Big) \\ 
& =\psi \Big(\max\Big\{\Vert Sx-Sy\Vert,\Vert Sx-Tx\Vert, \Vert Sy-Ty\Vert,%
\frac{1}{2}[\Vert Tx-Ty\Vert+\Vert Sy-Tx\Vert]\Big\}\Big) \\ 
& =\psi \Big(\max\Big\{\vert 2x+2y\vert,\vert 4x-2\vert,\vert
4y-2\vert,\vert 2x+2y-3\vert\Big\}\Big) \\ 
& =\psi(\vert 4y-2\vert)=2(4y-2)^{2}%
\end{array}%
\end{equation*}

and

\begin{equation*}
\begin{array}{rl}
& \varphi \Big(\int_{0}^{n_{1}(x,y)}\phi(t)dt\Big) \\ 
& =\varphi\Big(\max\Big\{\Vert Sx-Sy\Vert,\Vert Sx-Tx\Vert,\Vert Sy-Ty\Vert%
\Big\}\Big) \\ 
& =\varphi \Big(\max\Big\{\vert 2x+2y\vert,\vert 4x-2\vert,\vert 4y-2\vert%
\Big\}\Big) =\varphi (\vert 4y-2\vert)=1.%
\end{array}%
\end{equation*}

Since, $8(x+y-1)^{2}\leq 2(4y-2)^{2}-1$, so

\begin{equation*}
\begin{array}{rl}
\psi \Big(\int_{0}^{\Vert Sx-Ty\Vert }\phi (t)dt\Big) & \leq \psi
(\int_{0}^{m_{1}(x,y)}\phi (t)dt)-\varphi (\int_{0}^{n_{1}(x,y)}\phi (t)dt)
\\ 
& =F\Big(\psi (\int_{0}^{m_{1}(x,y)}\phi (t)dt),\varphi
(\int_{0}^{n_{1}(x,y)}\phi(t)dt) \Big),%
\end{array}%
\end{equation*}

then relation (\ref{eq3.8}) is established.

\item[(iii)] $y=0$, then 
\begin{equation*}
\begin{array}{rl}
& \psi\Big(\int_{0}^{\Vert Sx-Ty\Vert}\phi(t)dt\Big) \\ 
& =\psi(\Vert Sx-Sy\Vert)=\psi(\vert 2x\vert)=8x^{2},%
\end{array}%
\end{equation*}

\begin{equation*}
\begin{array}{rl}
& \psi \Big(\int_{0}^{m_{1}(x,y)} \phi(t)dt\Big) \\ 
& =\psi \Big(\max\Big\{\Vert Sx-Sy\Vert,\Vert Sx-Tx\Vert, \Vert Sy-Ty\Vert,%
\frac{1}{2}[\Vert Tx-Ty\Vert+\Vert Sy-Tx\Vert]\Big\}\Big) \\ 
& =\psi \Big(\max\Big\{\vert 2x\vert,\vert 4x-2\vert,0,\vert 2x-2\vert\Big\}%
\Big) \\ 
& =\psi(\vert 4x-2\vert)=2(4x-2)^{2}%
\end{array}%
\end{equation*}

and

\begin{equation*}
\begin{array}{rl}
& \varphi \Big(\int_{0}^{n_{1}(x,y)}\varphi(t)dt\Big) \\ 
& =\varphi \Big(\max\Big\{\Vert Sx-Sy\Vert,\Vert Sx-Tx\Vert,\Vert Sy-Ty\Vert%
\Big\}\Big) \\ 
& =\varphi\Big(\max\Big\{\vert 2x\vert,\vert 4x-2\vert,0\Big\}\Big)%
=\varphi(\vert 4x-2\vert)=1. \\ 
& 
\end{array}%
\end{equation*}

Since, $8x^{2}\leq 2(4x-2)^{2}-1$, so

\begin{equation*}
\begin{array}{rl}
\psi \Big(\int_{0}^{\Vert Sx-Ty\Vert }\phi (t)dt\Big) & \leq \psi
(\int_{0}^{m_{1}(x,y)}\phi (t)dt)-\varphi (\int_{0}^{n_{1}(x,y)}\phi (t)dt)
\\ 
& =F\Big(\psi (\int_{0}^{m_{1}(x,y)}\phi (t)dt),\varphi
(\int_{0}^{n_{1}(x,y)}\phi(t)dt) \Big),%
\end{array}%
\end{equation*}

then relation (\ref{eq3.8}) is established.

\item[(iv)] $x=0$, then

\begin{equation*}
\begin{array}{rl}
& \psi\Big(\int_{0}^{\Vert Sx-Ty\Vert}\phi(t)dt\Big) \\ 
& =\psi(\Vert Sx-Ty\Vert)=\psi(\vert 2y-2\vert)=2(2y-2)^{2},%
\end{array}%
\end{equation*}

\begin{equation*}
\begin{array}{rl}
& \psi \Big(\int_{0}^{m_{1}(x,y)} \phi(t)dt\Big) \\ 
& =\psi \Big(\max\Big\{\Vert Sx-Sy\Vert,\Vert Sx-Tx\Vert, \Vert Sy-Ty\Vert,%
\frac{1}{2}[\Vert Tx-Ty\Vert+\Vert Sy-Tx\Vert]\Big\}\Big) \\ 
& =\psi \Big(\max\Big\{\vert 2y\vert,0,\vert 4y-2\vert,\vert 2y-1\vert\Big\}%
\Big) =\psi(\vert 4y-2\vert) \\ 
& =2(4y-2)^{2}%
\end{array}%
\end{equation*}

and

\begin{equation*}
\begin{array}{rl}
& \varphi\Big(\int_{0}^{n_{1}(x,y)}\phi(t)dt\Big) \\ 
& =\varphi \Big(\max\Big\{\Vert Sx-Sy\Vert,\Vert Sx-Tx\Vert,\Vert Sy-Ty\Vert%
\Big\}\Big) \\ 
& =\varphi\Big(\max\Big\{\vert 2y\vert,0,\vert 4y-2\vert\Big\}\Big)%
=\phi(\vert 4y-2\vert)=1. \\ 
& 
\end{array}%
\end{equation*}

Since, $2(2y-2)^{2}\leq 2(4y-2)^{2}-1$, so

\begin{equation*}
\begin{array}{rl}
\psi \Big(\int_{0}^{\Vert Sx-Ty\Vert }\phi (t)dt\Big) & \leq \psi
(\int_{0}^{m_{1}(x,y)}\phi (t)dt)-\varphi (\int_{0}^{n_{1}(x,y)}\phi (t)dt)
\\ 
& =F\Big(\psi (\int_{0}^{m_{1}(x,y)}\phi (t)dt),\varphi
(\int_{0}^{n_{1}(x,y)}\phi(t)dt) \Big),%
\end{array}%
\end{equation*}

then relation (\ref{eq3.8}) is established.
\end{itemize}

Case-$2:x=y$.\newline
clearly the cases are reinstated. Therefore, for all $x,y\in M$,

\begin{equation*}
\psi\Big(\int_{0}^{\Vert Sx-Ty \Vert}\phi(t)dt\Big) \leq F\Big(\psi (
\int_{0}^{m_{1}(x,y)}\varphi(t)dt),\varphi(\int_{0}^{n_{1}(x,y)} phi(t)dt)%
\Big).
\end{equation*}

Accordingly, the conditions of Theorem \ref{t3.2} are satisfied and $\text{%
Fix}(S)=\text{Fix}(S)\cap \text{Fix}(T) =\text{Fix}(T)=\{0\}$, $S(\text{Fix}%
(T))\subseteq \text{Fix}(T)$ and $T(\text{Fix}(S))\subseteq \text{Fix}(S)$.
Therefore, $(S,T)$ is a symmetric Banach operator pair.
\end{example}

Here, we introduce some applied common fixed point theorems.\newline
The next theorem can also be obtained with a minor modifications of the
first part of the proof of Theorem \ref{t3.2}, so we omit its proof.

\begin{theorem}
\label{t3.4} Let $M$ be a subset of a normed space $X$. If the pair of
self-mappings $(S,T)$ on $M$ have the following conditions:

\begin{itemize}
\item[(i)] $(S,T)$ is a generalized $\mathcal{JH}$-operator pair with order $%
n_{0}\in \mathbb{N}$,

\item[(ii)] for each $x,y\in M$, 
\begin{equation}
\psi(\int_{0}^{\Vert Tx-Ty\Vert}\phi(t)dt)\leq F\Big(\psi(\int_{0}^{%
\gamma(m_{2}(x,y))}\phi(t)dt),
\varphi(\int_{0}^{\gamma(m_{2}(x,y))}\phi(t)dt)\Big),  \label{eq3.12}
\end{equation}
\end{itemize}

where $\psi \in \Psi, \varphi \in \Phi_{u}, F\in \mathcal{C}$, such that $%
(\psi ,\varphi ,F)$ is monotone. Then $\text{Fix}(S)\cap \text{Fix}%
(T)\neq\emptyset$.
\end{theorem}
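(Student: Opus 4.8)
The plan is to reproduce, with the obvious bookkeeping changes, the first half of the proof of Theorem~\ref{t3.2}; since only the existence of a common fixed point is asserted here, the portion of that argument establishing the symmetric Banach operator property is not needed. First I would use hypothesis~(i): as $(S,T)$ is a generalized $\mathcal{JH}$-operator pair with order $n_{0}$, its defining property supplies a point $w=Sx=Tx\in PC(S,T)$ with
\[
\|w-x\|\le \big(\delta(PC(S,T))\big)^{n_{0}} .
\]
The crux is then to show $\delta(PC(S,T))=0$.

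Arguing by contradiction, I would suppose there is a second point of coincidence $z=Sy=Ty$ with $z\neq w$, so that $\|w-z\|>0$. Evaluating the quantities entering $m_{2}$ from \eqref{eq2.4} at this pair gives $\|Sx-Sy\|=\|w-z\|$, $\|Sx-Tx\|=\|Sy-Ty\|=0$ and $\tfrac12\big[\|Sx-Ty\|+\|Sy-Tx\|\big]=\|w-z\|$, whence $m_{2}(x,y)=\|w-z\|$; likewise $\|Tx-Ty\|=\|w-z\|$. Substituting these values into \eqref{eq3.12} yields
\[
\psi\Big(\int_{0}^{\|w-z\|}\phi(t)\,dt\Big)\le F\Big(\psi\big(\int_{0}^{\gamma(\|w-z\|)}\phi(t)\,dt\big),\,\varphi\big(\int_{0}^{\gamma(\|w-z\|)}\phi(t)\,dt\big)\Big).
\]

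Writing $s=\psi(\int_{0}^{\gamma(\|w-z\|)}\phi)$ and $t_{0}=\varphi(\int_{0}^{\gamma(\|w-z\|)}\phi)$, I would combine the $C$-class axiom $F(s,t_{0})\le s$ with $\gamma(\|w-z\|)<\|w-z\|$, the fact that $r\mapsto\int_{0}^{r}\phi$ is non-decreasing and strictly positive for positive $r$ by \eqref{eq2.1}, and the monotonicity of $\psi$, to squeeze
\[
\psi\Big(\int_{0}^{\|w-z\|}\phi\Big)\le F(s,t_{0})\le s=\psi\Big(\int_{0}^{\gamma(\|w-z\|)}\phi\Big)\le\psi\Big(\int_{0}^{\|w-z\|}\phi\Big),
\]
so that every inequality is an equality and, in particular, $F(s,t_{0})=s$. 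The second $C$-class axiom then forces $s=0$ or $t_{0}=0$. If $s=0$ then $\int_{0}^{\gamma(\|w-z\|)}\phi=0$ because $\psi$ is an altering distance function; if $t_{0}=0$ the same conclusion follows since an ultra altering distance function vanishes only at $0$. In either case \eqref{eq2.1} gives $\gamma(\|w-z\|)=0$, contradicting $\gamma:(0,\infty)\to(0,\infty)$.

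Consequently $PC(S,T)$ is a singleton and $\delta(PC(S,T))=0$, so the estimate from~(i) becomes $\|w-x\|\le 0$, i.e.\ $w=x$; hence $x=Sx=Tx$ and $\text{Fix}(S)\cap\text{Fix}(T)\neq\emptyset$. The one step that genuinely differs from Theorem~\ref{t3.2}, and where care is needed, is this $\gamma$-squeeze: in Theorem~\ref{t3.2} the two slots of $F$ are filled by $m_{1}$ and $n_{1}$, which already coincide at a point of coincidence, whereas here both slots carry the \emph{same} contracted quantity $\gamma(m_{2})$, and one must use $\gamma(r)<r$ together with the monotonicity of $\psi$ and of the integral to collapse the chain of inequalities before invoking the $C$-class equality axiom.
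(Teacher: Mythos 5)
Your proof is correct and is essentially the argument the paper intends: the paper omits the proof of Theorem~\ref{t3.4}, stating only that it follows from ``minor modifications of the first part of the proof of Theorem~\ref{t3.2},'' and your write-up is precisely that first part adapted to the contraction \eqref{eq3.12}. You have also correctly identified and justified the one genuinely new step (the squeeze using $\gamma(r)<r$, the monotonicity of $\psi$ and of $r\mapsto\int_{0}^{r}\phi(t)\,dt$, and condition \eqref{eq2.1}) that the paper's phrase ``minor modifications'' leaves implicit.
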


\begin{theorem}
\label{t3.5} Let $S$ and $T$ be self-mappings on a $q$-starshaped subset $M$
of a normed space $X$ where $q\in \text{Fix}(S)$ and the pair $(S,T)$ have
the following conditions:

\begin{itemize}
\item[(i)] The order pair $(S,T)$ is a generalized $\mathcal{JH}$%
-suboperator pair with order $n_{0}\in \mathbb{N}$;

\item[(ii)] For each $x,y\in M$, 
\begin{equation}
\psi(\int_{0}^{\Vert Tx-Ty\Vert}\phi(t)dt)\leq F\Big(\psi(\int_{0}^{\frac{1}{%
k}\gamma(m_{3}(x,y))}\phi(t)dt),\varphi(\int_{0}^{\frac{1}{k}%
\gamma(m_{3}(x,y))}\phi(t)dt)\Big),  \label{eq3.13}
\end{equation}

for each $k\in(0,1)$, where $\psi \in \Psi, \varphi \in \Phi_{u}, F\in 
\mathcal{C}$, such that $(\psi ,\varphi ,F)$ is monotone.
\end{itemize}

Then $\text{Fix}(S)\cap \text{Fix}(T)\neq\emptyset$, if one of the listed
conditions holds:

\begin{itemize}
\item[(1a)] $cl(T(M))$ is compact, $S$ and $T$ are continuous on $M$;

\item[(1b)] $wcl(T(M))$ is weakly compact, $T$ is completely continuous and $%
S$ is continuous;

\item[(1c)] $wcl(T(M))$ is weakly compact, $S$ is weakly continuous and $%
(S-T)$ is demiclosed at 0;

\item[(1d)] $M$ is bounded and complete, $T$ is hemicompact and $S$ is
continuous.
\end{itemize}
\end{theorem}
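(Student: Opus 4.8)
The plan is to run the standard ``$T_k$-approximation'' argument. For each fixed $k\in(0,1)$ put $T_k x=(1-k)q+kTx$, which maps $M$ into $M$ because $M$ is $q$-starshaped. By hypothesis (i) and Definition \ref{d2.1} the pair $(S,T_k)$ is a generalized $\mathcal{JH}$-operator pair with order $n_0$, so there is a point of coincidence $w=Sx_k=T_k x_k$ satisfying $\Vert w-x_k\Vert\le(\delta(PC(S,T_k)))^{n_0}$. The first goal is therefore to prove $\delta(PC(S,T_k))=0$, exactly as in the first part of the proof of Theorem \ref{t3.2} (and Theorem \ref{t3.4}).

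Suppose $w=Sx=T_k x$ and $z=Sy=T_k y$ are two points of coincidence of $(S,T_k)$ with $w\ne z$, and set $r=\Vert w-z\Vert>0$. Since $T_k x\in[q,Tx]$ we get $d(Sx,[q,Tx])=0$ and likewise $d(Sy,[q,Ty])=0$, while $d(Sx,[q,Ty])\le\Vert Sx-T_k y\Vert=r$ and $d(Sy,[q,Tx])\le\Vert Sy-T_k x\Vert=r$; together with $\Vert Sx-Sy\Vert=r$ this forces $m_3(x,y)=r$. Because $\Vert T_k x-T_k y\Vert=k\Vert Tx-Ty\Vert$, we also have $\Vert Tx-Ty\Vert=\tfrac1k r$, so the arguments entering (ii) are $\tfrac1k r$ on the left and $\tfrac1k\gamma(m_3(x,y))=\tfrac1k\gamma(r)$ on the right. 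The crucial observation is that the blow-up factor $\tfrac1k$ is common to both sides, and since $\gamma(r)<r$ we obtain $\tfrac1k\gamma(r)<\tfrac1k r$, hence $\int_0^{\frac1k\gamma(r)}\phi\,dt<\int_0^{\frac1k r}\phi\,dt$ by \eqref{eq2.1}. Writing $A=\int_0^{\frac1k r}\phi\,dt$ and $B=\int_0^{\frac1k\gamma(r)}\phi\,dt$, so that $B<A$ and $\psi(B)\le\psi(A)$ by monotonicity of $\psi$, condition (ii) and axiom (1) of the $C$-class give $\psi(A)\le F(\psi(B),\varphi(B))\le\psi(B)\le\psi(A)$. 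Thus $F(\psi(B),\varphi(B))=\psi(B)$, and axiom (2) forces $\psi(B)=0$ or $\varphi(B)=0$; since $r>0$ implies $\gamma(r)>0$ and hence $B>0$, both alternatives contradict the defining properties of $\psi\in\Psi$ and $\varphi\in\Phi_u$. Therefore $w=z$, $PC(S,T_k)$ is a singleton, and $\delta(PC(S,T_k))=0$. Then $\Vert w-x_k\Vert\le 0$ yields $x_k=Sx_k=T_k x_k$, i.e. $x_k$ is a common fixed point of $S$ and $T_k$; in particular $Sx_k=x_k$ and $x_k=(1-k)q+kTx_k$.

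Next choose $k_n\in(0,1)$ with $k_n\to1$ and put $x_n:=x_{k_n}$. From $x_n=(1-k_n)q+k_n Tx_n$ we get $x_n-Tx_n=(1-k_n)(q-Tx_n)$, so $\Vert x_n-Tx_n\Vert=(1-k_n)\Vert q-Tx_n\Vert\to0$, using that $\{Tx_n\}$ is bounded (in each listed case compactness, weak compactness of the relevant closure, or boundedness of $M$ in (1d) forces this). It then remains to extract a limit point and push it through $S$ and $T$, which is where the four hypotheses split: in (1a) compactness of $cl(T(M))$ gives a subsequence $Tx_{n_j}\to y$, whence $x_{n_j}\to y$, and continuity of $T,S$ with $Sx_{n_j}=x_{n_j}$ yields $Ty=y=Sy$; in (1b) weak compactness gives $Tx_{n_j}\rightharpoonup y$, hence $x_{n_j}\rightharpoonup y$, and complete continuity of $T$ upgrades this to $Tx_{n_j}\to Ty$, forcing $y=Ty$, then $x_{n_j}\to y$ and $Sy=y$; in (1c) from $x_{n_j}\rightharpoonup y$ and $(S-T)x_{n_j}=x_{n_j}-Tx_{n_j}\to0$, demiclosedness of $S-T$ at $0$ gives $Sy=Ty$, while weak continuity of $S$ with $Sx_{n_j}=x_{n_j}\rightharpoonup y$ gives $Sy=y$; in (1d) hemicompactness of $T$ with $\Vert x_n-Tx_n\Vert\to0$ yields $x_{n_j}\to y\in M$ and continuity finishes as in (1a). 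In every case $y\in\text{Fix}(S)\cap\text{Fix}(T)$, as claimed.

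The main obstacle is the singleton step of the second paragraph: one must recognize that the contractive inequality (ii) is deliberately stated with the argument $\tfrac1k\gamma(m_3)$ so that the factor $\tfrac1k$ arising from $\Vert Tx-Ty\Vert=\tfrac1k\Vert T_k x-T_k y\Vert$ cancels, leaving a genuine strict contraction after composing with $\gamma$; without this cancellation the comparison $B<A$ (and hence the contradiction) would break down. The remaining work — verifying $m_3(x,y)=r$ and dispatching the four topological cases — is routine once $\Vert x_n-Tx_n\Vert\to0$ has been secured.
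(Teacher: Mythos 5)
Your core argument is the paper's own strategy: approximate $T$ by $T_{k_n}x=(1-k_n)q+k_nTx$ with $k_n\to 1$, use hypothesis (i) to produce a coincidence point of $(S,T_{k_n})$, show $PC(S,T_{k_n})$ is a singleton so that this point is a common fixed point $x_n$ of $S$ and $T_{k_n}$, and then extract a limit under (1a)--(1d). Where you deviate is the middle step, and your version is actually tighter than the paper's: the paper derives inequality \eqref{eq3.14} for the pair $(S,T_n)$ --- a step that rescales both integration limits in \eqref{eq3.13} by $k_n$, which monotonicity of $(\psi,\varphi,F)$ does not by itself justify, since shrinking the right-hand argument moves $F(\psi(\cdot),\varphi(\cdot))$ in the wrong direction --- and then invokes Theorem \ref{t3.4}, deferring the cases (1a)--(1d) to Theorem 2.2 of \cite{NHBE} and Theorem 2.2 of \cite{BP}. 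You instead evaluate \eqref{eq3.13} directly at two coincidence points $w=Sx=T_kx$, $z=Sy=T_ky$, where $m_{3}(x,y)=\Vert w-z\Vert$ and $\Vert Tx-Ty\Vert=\frac{1}{k}\Vert w-z\Vert$ hold exactly, so the $\frac{1}{k}$-cancellation is legitimate and the $C$-class contradiction goes through. One small overstatement: \eqref{eq2.1} does not give the strict inequality $B<A$ (nothing prevents $\phi$ from vanishing on the interval $[\frac{1}{k}\gamma(r),\frac{1}{k}r]$), but you only use $\psi(B)\leq\psi(A)$, which follows from $B\leq A$, so this is harmless. Your cases (1a), (1b), (1c) are correct.

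The genuine gap is in case (1d). There $M$ is bounded and complete, $T$ is hemicompact and only $S$ is continuous; continuity of $T$ is \emph{not} a hypothesis, so ``continuity finishes as in (1a)'' is not available: hemicompactness plus $\Vert x_n-Tx_n\Vert\to 0$ gives $x_{n_j}\to y$, and continuity of $S$ gives $Sy=y$, but nothing you have written yields $Ty=y$. The case can be closed, but only by returning to the contractive condition. Since $\Vert x_{n_j}-Tx_{n_j}\Vert\to 0$ we also have $Tx_{n_j}\to y$. First suppose $c:=d(y,[q,Ty])>0$; one checks that $m_{3}(x_{n_j},y)=c$ for large $j$, and since $\gamma(c)<c\leq\Vert y-Ty\Vert$ one may fix $k$ so close to $1$ that $\frac{1}{k}\gamma(c)<\Vert Tx_{n_j}-Ty\Vert$ for large $j$; then with $A_j=\int_0^{\Vert Tx_{n_j}-Ty\Vert}\phi(t)dt$ and $B=\int_0^{\frac{1}{k}\gamma(c)}\phi(t)dt$ the chain $\psi(A_j)\leq F(\psi(B),\varphi(B))\leq\psi(B)\leq\psi(A_j)$ forces $B=0$, contradicting \eqref{eq2.1}; hence $c=0$. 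Then all four terms of $m_{3}(x_{n_j},y)$ tend to $0$, so \eqref{eq3.13}, the bound $\gamma(t)<t$, and continuity of $\psi$ and of $s\mapsto\int_0^s\phi(t)dt$ give $\psi(\int_0^{\Vert y-Ty\Vert}\phi(t)dt)\leq\lim_j\psi(\int_0^{\frac{1}{k}m_{3}(x_{n_j},y)}\phi(t)dt)=\psi(0)=0$, whence $Ty=y$. (Alternatively, the references the paper cites treat the hemicompact case with $T$ continuous, which is exactly the hypothesis your argument silently borrows.) Without some such repair, your proof of (1d) uses an assumption the theorem does not provide.
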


\begin{proof}
Define $T_{n}:M\rightarrow M$ by $T_{n}x=(1-k_{n})q+k_{n}Tx$ for some $q$
and all $x\in M$ and a fixed sequence of real numbers $k_{n} (0<k_{n}<1)$
converging to $1$. First we show that there exists $x_{n}\in M$, such that $%
x_{n}\in \text{Fix}(S)\cap \text{Fix}(T_{n})$. Since $(S,T)$ is a
generalized $\mathcal{JH}$-suboperator with order $n_{0}$, $(S,T_{n})$ is a
generalized $\mathcal{JH}$-operator with order $n_{0}$ for all $n\in \mathbb{%
N}$. Using inequality (\ref{eq3.13}) it follows that for all $x,y\in M$

\begin{equation}
\begin{array}{rl}
& \psi(\int_{0}^{\Vert T_{n}x-T_{n}y\Vert}\phi(t)dt) \\ 
& =\psi(\int_{0}^{k_{n}\Vert Tx-Ty\Vert}\phi(t)dt) \\ 
& \leq F\Big(\psi(\int_{0}^{k_{n}(\frac{1}{k_{n}}\gamma(m_{3}(x,y)))}%
\phi(t)dt), \varphi(\int_{0}^{k_{n}(\frac{1}{k_{n}}\gamma(m_{3}(x,y)))}%
\phi(t)dt)\Big) \\ 
& =F\Big(\psi(\int_{0}^{\gamma(m_{3}(x,y))}\phi(t)dt),
\varphi(\int_{0}^{\gamma(m_{3}(x,y))}\phi(t)dt)\Big) \\ 
& \leq F\Big(\psi(\int_{0}^{\gamma(\max\{\Vert Sx-Sy\Vert,\Vert
Sx-T_{n}x\Vert,\Vert Sy-T_{n}y\Vert, \frac{1}{2}[\Vert Sx-T_{n}y\Vert+\Vert
Sy-T_{n}x\Vert]\})}\phi(t)dt), \\ 
& \varphi(\int_{0}^{\gamma(\max\{\Vert Sx-Sy\Vert,\Vert Sx-T_{n}x\Vert,\Vert
Sy-T_{n}y\Vert, \frac{1}{2}[\Vert Sx-T_{n}y\Vert+\Vert
Sy-T_{n}x\Vert]\})}\phi(t)dt)\Big).%
\end{array}
\label{eq3.14}
\end{equation}

Then, by Theorem \ref{t3.4}, there exists $x_{n}\in M$, such that $%
Sx_{n}=T_{n}x_{n}=x_{n}$. Now, if one of the conditions $(1a),...,(1d)$
being established then the details is similar to the proof of the Theorem $%
2.2$ of \cite{NHBE} and Theorem $2.2$ of \cite{BP}.
\end{proof}

\begin{theorem}
\label{t3.6} Let $S$ and $T$ be self-mappings on a $q$-starshaped subset $M$
of a normed space $X$ where $q\in \text{Fix}(S)$ and the pair $(S,T)$ have
the following conditions:

\begin{itemize}
\item[(i)] The order pair $(S,T)$ is a generalized $\mathcal{JH}$%
-suboperator pair with order $n_{0}\in \mathbb{N}$;

\item[(ii)] For each $x,y\in M$, 
\begin{equation}
\psi(\int_{0}^{\Vert Tx-Ty\Vert}\phi(t)dt)\leq F\Big(\psi(\int_{0}^{\frac{1}{%
k}\gamma (\Vert Sx-Sy\Vert)}\phi(t)dt), \varphi(\int_{0}^{\frac{1}{k}\gamma
(\Vert Sx-Sy\Vert)}\phi(t)dt)\Big) ,  \label{eq3.15}
\end{equation}

for each $k\in(0,1)$, where $\gamma$ is continuous, $\psi \in \Psi, \varphi
\in \Phi_{u}, F\in \mathcal{C}$, such that $(\psi ,\varphi ,F)$ is strictly
monotone;

\item[(iii)] $wcl(T(M))$ is weakly compact, $S$ is weakly continuous and $X$
satisfies Opial's condition.
\end{itemize}

Then $\text{Fix}(S)\cap \text{Fix}(T)\neq\emptyset$.
\end{theorem}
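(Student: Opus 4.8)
The plan is to follow the approximation scheme used in the proof of Theorem~\ref{t3.5}, but to replace the demiclosedness argument of case~(1c) by one based on Opial's condition. First I would fix a sequence $k_{n}\in(0,1)$ with $k_{n}\to 1$ and set $T_{n}x=(1-k_{n})q+k_{n}Tx$. Since $(S,T)$ is a generalized $\mathcal{JH}$-suboperator pair with order $n_{0}$, each $(S,T_{n})$ is a generalized $\mathcal{JH}$-operator pair with order $n_{0}$. Using $\Vert T_{n}x-T_{n}y\Vert=k_{n}\Vert Tx-Ty\Vert$ together with hypothesis~(ii) applied with $k=k_{n}$, exactly as in computation~\eqref{eq3.14}, I would reach an estimate whose right-hand side involves $\gamma(\Vert Sx-Sy\Vert)$; then, because the quantity $m_{2}(x,y)$ formed with $T_{n}$ in place of $T$ dominates $\Vert Sx-Sy\Vert$, $\gamma$ is nondecreasing, and $(\psi,\varphi,F)$ is monotone, I would conclude that $(S,T_{n})$ satisfies the contractive hypothesis~\eqref{eq3.12} of Theorem~\ref{t3.4}. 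Theorem~\ref{t3.4} then supplies a point $x_{n}$ with $Sx_{n}=T_{n}x_{n}=x_{n}$.

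Next I would exploit condition~(iii). Since $wcl(T(M))$ is weakly compact it is bounded, so $\{Tx_{n}\}$ is bounded and $\Vert x_{n}-Tx_{n}\Vert=(1-k_{n})\Vert q-Tx_{n}\Vert\to 0$. By weak sequential compactness (Eberlein--\v{S}mulian) I would pass to a subsequence, still denoted $\{x_{n}\}$, with $Tx_{n}\rightharpoonup y$ for some $y$; since $(1-k_{n})q\to 0$ strongly and $k_{n}\to 1$, this gives $x_{n}=(1-k_{n})q+k_{n}Tx_{n}\rightharpoonup y$. Weak continuity of $S$ then yields $Sx_{n}\rightharpoonup Sy$, and as $Sx_{n}=x_{n}\rightharpoonup y$ the uniqueness of weak limits forces $Sy=y$, so $y\in\mathrm{Fix}(S)$.

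It remains to show $Ty=y$, and this is the step where Opial's condition enters and is the main obstacle. Suppose $Ty\neq y$. Applying~\eqref{eq3.15} to the pair $(x_{n},y)$, using $Sx_{n}=x_{n}$, $Sy=y$, the property $F(s,t)\le s$ of a $C$-class function, the monotonicity of $t\mapsto\psi(\int_{0}^{t}\phi(s)\,ds)$, the relation $\gamma(t)<t$, and letting $k\to 1$, I would obtain $\Vert Tx_{n}-Ty\Vert\le\gamma(\Vert x_{n}-y\Vert)<\Vert x_{n}-y\Vert$, whence $\liminf_{n}\Vert Tx_{n}-Ty\Vert\le\liminf_{n}\Vert x_{n}-y\Vert$. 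Combining this with $\Vert x_{n}-Ty\Vert\le\Vert x_{n}-Tx_{n}\Vert+\Vert Tx_{n}-Ty\Vert$ and $\Vert x_{n}-Tx_{n}\Vert\to 0$ gives $\liminf_{n}\Vert x_{n}-Ty\Vert\le\liminf_{n}\Vert x_{n}-y\Vert$, contradicting Opial's inequality $\liminf_{n}\Vert x_{n}-y\Vert<\liminf_{n}\Vert x_{n}-Ty\Vert$. Hence $Ty=y$, so $y\in\mathrm{Fix}(S)\cap\mathrm{Fix}(T)$ and this set is nonempty.

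The delicate points will be passing from the $\psi$--$F$ estimate to the pointwise bound $\Vert Tx_{n}-Ty\Vert\le\gamma(\Vert x_{n}-y\Vert)$, which relies on the strict monotonicity of $(\psi,\varphi,F)$ together with the positivity condition~\eqref{eq2.1} to invert $t\mapsto\psi(\int_{0}^{t}\phi(s)\,ds)$, and justifying the limit $k\to 1$ inside~\eqref{eq3.15}; both are handled by the continuity assumed of $\gamma$, $\psi$, $\varphi$ and $F$. I would also dispose of the degenerate case $\liminf_{n}\Vert x_{n}-y\Vert=0$ separately: there $x_{n}\to y$ strongly, so $Tx_{n}\to y$ while $\Vert Tx_{n}-Ty\Vert\to 0$, and these force $Ty=y$ directly without invoking Opial's condition.
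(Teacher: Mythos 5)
Your overall scheme is the paper's own: the approximants $T_{n}x=(1-k_{n})q+k_{n}Tx$, the reduction to Theorem~\ref{t3.4} to produce $x_{n}\in \text{Fix}(S)\cap \text{Fix}(T_{n})$, extraction of a weak limit $y$ from the weak compactness of $wcl(T(M))$, $Sy=y$ from weak continuity of $S$, the estimate $\Vert x_{n}-Tx_{n}\Vert \to 0$, and an Opial-based contradiction to force $Ty=y$. Two of your refinements are in fact more careful than the paper: you justify $x_{n}\rightharpoonup y$ by first extracting a weakly convergent subsequence of $\{Tx_{n}\}\subseteq wcl(T(M))$ (the paper extracts it from $\{x_{n}\}$ directly, which needs exactly your argument), and you treat the degenerate case $\liminf_{n}\Vert x_{n}-y\Vert=0$ separately, which the paper silently ignores even though its strict-inequality step breaks down there.

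The genuine gap is the step ``whence $\Vert Tx_{n}-Ty\Vert \le \gamma(\Vert x_{n}-y\Vert)$''. From \eqref{eq3.15}, $F(s,t)\le s$ and $k\to 1$ you correctly get $\psi\bigl(\int_{0}^{\Vert Tx_{n}-Ty\Vert}\phi(t)dt\bigr)\le \psi\bigl(\int_{0}^{\gamma(\Vert x_{n}-y\Vert)}\phi(t)dt\bigr)$, but you cannot invert $g(t)=\psi\bigl(\int_{0}^{t}\phi(s)ds\bigr)$: $\psi\in\Psi$ is only nondecreasing, and \eqref{eq2.1} guarantees $\int_{0}^{\epsilon}\phi>0$ but not $\int_{a}^{b}\phi>0$ for $0<a<b$, so $g$ may be constant on intervals away from $0$ and $g(a)\le g(b)$ does not yield $a\le b$. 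Strict monotonicity of the triple does not repair this, because it constrains only the composite $F(\psi(\cdot),\varphi(\cdot))$, not $\psi$ alone: for instance $F(s,t)=s-1+\frac{t}{1+t}$ is a $C$-class function (condition (2) holds vacuously), and with $\varphi(t)=t$ the triple $(\psi,\varphi,F)$ is strictly monotone even when $\psi$ is constant on an interval; for such data your pointwise bound simply does not follow. The paper's proof is arranged precisely to avoid this inversion: it invokes Opial's condition \emph{first}, obtaining $L:=\liminf_{m}\Vert x_{m}-y\Vert<\liminf_{m}\Vert x_{m}-Ty\Vert\le \liminf_{m}\Vert Tx_{m}-Ty\Vert$, then feeds everything into $g$ and runs the contraction chain entirely inside $F(\psi(\cdot),\varphi(\cdot))$, where the \emph{assumed} strict monotonicity of $(\psi,\varphi,F)$, applied to the two integrals with upper limits $\gamma(L)<L$, yields the contradiction $\psi\bigl(\int_{0}^{L}\phi(t)dt\bigr)<\psi\bigl(\int_{0}^{L}\phi(t)dt\bigr)$. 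Replacing your inversion step by this arrangement (and closing your degenerate case via $g(t)=0\Leftrightarrow t=0$ rather than via the pointwise bound) turns your outline into the paper's proof. To be fair, even the paper's strict step tacitly assumes $\int_{0}^{\gamma(L)}\phi<\int_{0}^{L}\phi$, which \eqref{eq2.1} does not deliver; but your route needs strictly more than that, namely strict injectivity of $\psi$ as well, and that is not available under the stated hypotheses.
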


\begin{proof}
If $T_{n}:M\rightarrow M$ is defined by $T_{n}x=(1-k_{n})q+k_{n}Tx$ for some 
$q$ and all $x\in M$ and a fixed sequence of real numbers $k_{n} (0<k_{n}<1)$
converging to $1$. Then similar to the proof of Theorem \ref{t3.5} there
exists $x_{n}\in M$ such that $Sx_{n}=T_{n}x_{n}=x_{n}$. The weak
compactness of $wcl(T(M))$ implies that there exists a subsequence $%
\{x_{m}\} $ of $\{x_{n}\}$ such that $x_{m}\rightarrow y$ weakly as $%
m\rightarrow \infty$. As $S$ is weakly continuous, $Sy=y$. Since $\{x_{m}\}$
is bounded, $k_{m}\rightarrow 1$, and

\begin{equation*}
\begin{array}{rl}
\Vert x_{m}-Tx_{m}\Vert & =\Vert
Sx_{m}-Tx_{m}\Vert=\Vert((1-k_{m})q+k_{m}Tx_{m})-Tx_{m}\Vert \\ 
& \leq(1-k_{m})(\Vert q\Vert+\Vert Tx_{m}\Vert),%
\end{array}%
\end{equation*}

then $\Vert x_{m}-Tx_{m}\Vert\rightarrow 0$ as $m\rightarrow \infty$. Now,
if $y\neq Ty$ , since $X$ satisfies Opial's condition, then

\begin{equation*}
\begin{array}{rl}
\liminf_{m\rightarrow \infty}\Vert x_{m}-y\Vert & <\liminf_{m\rightarrow
\infty}\Vert x_{m}-Ty\Vert\leq\liminf_{m\rightarrow \infty}\Vert
x_{m}-Tx_{m}\Vert \\ 
& +\liminf_{m\rightarrow \infty}\Vert Tx_{m}-Ty\Vert =\liminf_{m\rightarrow
\infty}\Vert Tx_{m}-Ty\Vert.%
\end{array}%
\end{equation*}

Hence

\begin{equation*}
\begin{array}{rl}
& \psi(\int_{0}^{\liminf_{m\rightarrow \infty}\Vert x_{m}-y\Vert}\phi(t)dt)
\\ 
& \leq \psi(\int_{0}^{\liminf_{m\rightarrow \infty}\Vert Tx_{m}-Ty\Vert}
\phi(t)dt) \\ 
& \leq F\Big(\psi(\int_{0}^{\liminf_{m\rightarrow \infty}\frac{1}{k_{m}}%
\gamma(\Vert Sx_{m}-Sy\Vert)}\phi(t)dt),
\varphi(\int_{0}^{\liminf_{m\rightarrow \infty}\frac{1}{k_{m}}\gamma(\Vert
Sx_{m}-Sy\Vert)}\phi(t)dt)\Big) \\ 
& < F\Big(\psi(\int_{0}^{\liminf_{m\rightarrow \infty}\frac{1}{k_{m}}\Vert
Sx_{m}-Sy\Vert}\phi(t)dt), \varphi(\int_{0}^{\liminf_{m\rightarrow \infty}%
\frac{1}{k_{m}}\Vert Sx_{m}-Sy\Vert}\phi(t)dt)\Big) \\ 
& \leq F\Big(\psi(\int_{0}^{\liminf_{m\rightarrow \infty}\Vert
x_{m}-y\Vert}\phi(t)dt),\varphi(\int_{0}^{\liminf_{m\rightarrow \infty}\Vert
x_{m}-y\Vert}\phi(t)dt)\Big),%
\end{array}%
\end{equation*}

which is a contradiction. Thus $Sy=Ty=y$ and hence $\text{Fix}(S)\cap \text{%
Fix}(T)\neq\emptyset$.
\end{proof}

\begin{remark}
\label{r3.7} If In Theorem \ref{t3.6}, we replace $(iii)$ with one of the
conditions $(1a),...,(1d)$, then $\text{Fix}(S)\cap \text{Fix}%
(T)\neq\emptyset$.
\end{remark}

\begin{definition}
\label{d3.8} If in Theorem \ref{t3.6}, we give $F(s,t)=2(s-t), \psi(t)=2t ,
\varphi(t)=t$, then the mapping $T$ satisfying inequality (\ref{eq3.15}) is
said generalized $(\gamma,S)$-contraction mapping of integral type. In
addition if $S$ is identity map in (\ref{eq3.15}) then $T$ is said
generalized $\gamma$-contraction mapping of integral type.
\end{definition}

Then we get the following corollary.

\begin{corollary}
\label{c3.9} Let $T$ is a self-mapping on a $q$-starshaped subset $M$ of a
normed space $X$. Assume that $T$ is a generalized $\gamma$-contraction
mapping of integral type. Then $\text{Fix}(T)\neq\emptyset$, if one of the
following conditions holds:

\begin{itemize}
\item[(2a)] $cl(T(M))$ is compact and $T$ is continuous on $M$;

\item[(2b)] $wcl(T(M))$ is weakly compact, $T$ is completely continuous;

\item[(2c)] $wcl(T(M))$ is weakly compact and $(id-T)$ is demiclosed at 0;

\item[(2d)] $M$ is bounded and complete, $T$ is hemicompact.
\end{itemize}
\end{corollary}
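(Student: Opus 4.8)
The plan is to obtain Corollary \ref{c3.9} as a direct specialization of Theorem \ref{t3.6} together with Remark \ref{r3.7}, taking $S=\mathrm{id}_M$ to be the identity map on $M$ and choosing $F(s,t)=2(s-t)$, $\psi(t)=2t$, $\varphi(t)=t$ exactly as in Definition \ref{d3.8}. With these choices $\psi\in\Psi$ and $\varphi\in\Phi_{u}$, and the triple $(\psi,\varphi,F)$ is strictly monotone, since $F(\psi(x),\varphi(x))=2(2x-x)=2x$ is strictly increasing in $x$. Moreover, the defining inequality of a generalized $\gamma$-contraction mapping of integral type is precisely inequality (\ref{eq3.15}) read with $S=\mathrm{id}_M$, so hypothesis (ii) of Theorem \ref{t3.6} holds by assumption; and since $\mathrm{id}_M$ is both continuous and weakly continuous and $q\in\text{Fix}(\mathrm{id}_M)=M$ automatically, the standing structural hypotheses of Theorem \ref{t3.6} cost nothing.

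Next I would match the topological hypotheses. Reading conditions $(1a)$--$(1d)$ of Remark \ref{r3.7} with $S=\mathrm{id}_M$, they collapse exactly onto $(2a)$--$(2d)$: in $(1a),(1b),(1d)$ the (weak) continuity of $S$ is automatic, and in $(1c)$ the weak continuity of $S$ is automatic while $S-T=\mathrm{id}_M-T$. Hence under any one of $(2a)$--$(2d)$ the corresponding hypothesis of Remark \ref{r3.7} is satisfied, so Remark \ref{r3.7} yields $\text{Fix}(S)\cap\text{Fix}(T)\neq\emptyset$. Since $\text{Fix}(S)=\text{Fix}(\mathrm{id}_M)=M$, this gives $\text{Fix}(T)=M\cap\text{Fix}(T)\neq\emptyset$, which is the desired conclusion.

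The one point requiring care, and the step I would treat as the main obstacle, is hypothesis (i): that $(\mathrm{id}_M,T)$ is a generalized $\mathcal{JH}$-suboperator pair, because the underlying machinery presupposes that the auxiliary maps $T_{n}x=(1-k_{n})q+k_{n}Tx$ admit points of coincidence with $\mathrm{id}_M$, i.e.\ fixed points. For $S=\mathrm{id}_M$ any point of coincidence of $(\mathrm{id}_M,T_{n})$ is a fixed point $x=T_{n}x$, whence $\Vert w-x\Vert=0\le(\delta(PC(\mathrm{id}_M,T_{n})))^{n_{0}}$ holds trivially for every order $n_{0}$, so the suboperator property reduces to the bare existence of such fixed points. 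These are supplied internally exactly as in the proof of Theorem \ref{t3.5}: the contraction descends to $T_{n}$ because $\Vert T_{n}x-T_{n}y\Vert=k_{n}\Vert Tx-Ty\Vert$ with $0<k_{n}<1$ gives $\int_{0}^{\Vert T_{n}x-T_{n}y\Vert}\phi(t)\,dt\le\int_{0}^{\gamma(\Vert x-y\Vert)}\phi(t)\,dt$, and since $\Vert x-y\Vert=\Vert Sx-Sy\Vert\le m_{2}(x,y)$ with $\gamma$ nondecreasing, hypothesis (ii) of Theorem \ref{t3.4} is met for $(\mathrm{id}_M,T_{n})$, producing $x_{n}$ with $T_{n}x_{n}=x_{n}$. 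The limiting argument of Theorem \ref{t3.6} then runs verbatim, now with $Sy=y$ automatic. The construction is non-circular precisely because the fixed points of the $T_{n}$ are furnished by the contraction together with the compactness or completeness assumption embedded in $(2a)$--$(2d)$, rather than by the conclusion being proved.
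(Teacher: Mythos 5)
Your overall route is exactly the paper's intended one: the paper offers no separate proof of Corollary \ref{c3.9}, presenting it as the immediate specialization of Theorem \ref{t3.6} (via Remark \ref{r3.7}) to $S=\mathrm{id}_M$ with the choices $F(s,t)=2(s-t)$, $\psi(t)=2t$, $\varphi(t)=t$ of Definition \ref{d3.8}; your verification of strict monotonicity and your matching of $(1a)$--$(1d)$ with $(2a)$--$(2d)$ are fine. The genuine problem is in the step you yourself flag as the main obstacle: discharging hypothesis (i). You correctly reduce it, for $S=\mathrm{id}_M$, to the bare statement that each $T_k$ has a fixed point (the inequality $\Vert w-x\Vert\le(\delta(PC(\mathrm{id}_M,T_k)))^{n_0}$ being trivial once $w=x=T_kx$ exists). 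But your claim that these fixed points are ``supplied internally'' by Theorem \ref{t3.4} is circular. Theorem \ref{t3.4} has two hypotheses, and its proof (the first part of the proof of Theorem \ref{t3.2}) begins precisely with ``By (i), there exists a point $w\in M$ such that $w=Sx=Tx$'': hypothesis (i) is the assumption that $PC(S,T)\neq\emptyset$ together with the distance bound. Since $PC(\mathrm{id}_M,T_n)=\text{Fix}(T_n)$, hypothesis (i) of Theorem \ref{t3.4} for the pair $(\mathrm{id}_M,T_n)$ \emph{is} the existence of a fixed point of $T_n$ --- the very thing you want to produce. Checking hypothesis (ii) for $(\mathrm{id}_M,T_n)$, which you do correctly, yields nothing by itself. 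The sharpest way to see the circularity: Definition \ref{d2.1} requires the operator-pair condition for every $k\in[0,1]$, and at $k=1$ (where $T_1=T$) the condition for $S=\mathrm{id}_M$ literally asserts $\text{Fix}(T)\neq\emptyset$, i.e.\ the conclusion of the corollary.

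Your closing assertion that non-circularity comes from ``the contraction together with the compactness or completeness assumption embedded in $(2a)$--$(2d)$'' misreads where those assumptions enter the machinery. In Theorems \ref{t3.5} and \ref{t3.6} the conditions $(1a)$--$(1d)$, respectively (iii), are used only \emph{after} the fixed points $x_n$ of $T_n$ are in hand (they come from hypothesis (i) via Theorem \ref{t3.4}), namely to extract a convergent (sub)sequence and identify its limit as a common fixed point; they play no role in creating the $x_n$. To genuinely remove hypothesis (i) you would need an independent fixed point theorem for each $T_n$ on a merely $q$-starshaped set; note that completeness is available only under $(2d)$, and that the integral condition does not even give a metric contraction estimate, since for a summable $\phi$ satisfying (\ref{eq2.1}) the inequality $\int_{0}^{a}\phi(t)\,dt\le\int_{0}^{b}\phi(t)\,dt$ does not imply $a\le b$ (take $\phi=1$ on $[0,1]$, $\phi=0$ on $(1,2)$, $\phi=1$ on $[2,\infty)$). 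So either the corollary is read as silently retaining the suboperator hypothesis --- which is how the anchoring phrase ``If in Theorem \ref{t3.6}\dots'' of Definition \ref{d3.8} suggests the authors understood it, in which case your additional argument is unnecessary --- or the hypothesis really is dropped, and then neither the paper nor your proposal closes the resulting gap.
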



\section{ $\mathcal{JH\Im}$-suboperator pairs}

Suppose $M$ is a subset of a normed space $X$ and let $\Im=\{f_{\alpha}\}_{%
\alpha\in M}$ be a family of functions from $[0,1]$ into $M$, having the
property that for each $\alpha\in M$ we have $f_{\alpha}(1)=\alpha$. Such a
family $\Im$ is said to be contractive provided there exists a function $%
\Phi:(0,1)\rightarrow(0,1)$ such that for all $\alpha$ and $\beta$ in $M$
and for all $t\in (0,1)$ we have 
\begin{equation}  \label{eq4.36}
\Vert f_{\alpha}(t)-f_{\beta}(t)\Vert\leq \Phi(t)\Vert \alpha-\beta\Vert.
\end{equation}
And is said to be jointly continuous provided that if $t\rightarrow t_{0}$
in $[0,1]$ and $\alpha\rightarrow \alpha_{0}$ in $M$ then $%
f_{\alpha}(t)\rightarrow f_{\alpha_{0}}(t_{0})$ in $M$ (see \cite{TNSC}).
Now, if $M$ be a $q$-starshaped with $q\in M$ subset of a normed space $X$
and $f_{x}(t)=(1-t)q+tx , \ x\in M , t\in [0,1]$, then $f_{x}(0)=q$ and $%
\{f_{x}:x\in M\}$ is obviously a contractive jointly continuous family with $%
\Phi(t)=t$. Thus the class of subset of $X$ with the property of contractive
and joint continuity contains the class of starshaped sets.

\begin{definition}
\label{d4.1} Let $M$ is a subset of a normed space $X$, and $S$ and $T$ are
self-mappings on $M$. Suppose that $M$ has a family $\Im=\{f_{x}:[0,1]%
\rightarrow M: f_{x}(1)=x \}_{x\in M}$. Then the order pair $(S,T)$ is
called a $\mathcal{JH\Im}$-suboperator pair, if for each $k\in [0,1]$, there
exists a point $w=Sx=T_{k}x$ in $PC(S,T_{k})$ such that 
\begin{equation*}
\Vert w-x\Vert \leq \delta(PC(S,T_{k})),
\end{equation*}
where $T_{k}$ is self-mapping on $M$ such that $T_{k}x=f_{Tx}(k)$, for all $%
x\in M $.
\end{definition}

\begin{remark}
\label{r4.2} By Definition \ref{d4.1} if $(S,T)$ be a $\mathcal{JH\Im}$%
-suboperator pair, then for each $k\in[0,1]$, $(S,T_{k})$ is a $\mathcal{JH}$%
-operator pair, where $T_{k}$ is self-mapping on $M$ such that $%
T_{k}x=f_{Tx}(k)$, for all $x\in M$.
\end{remark}

\begin{remark}
\label{r4.3} Let $S$ and $T$ are self-mappings on a $q$-starshaped $M$ of a
normed space $X$. Suppose $M$ has a family $\Im=\{f_{x}:[0,1]\rightarrow
M\}_{x\in M}$, where $f_{x}(t)=(1-t)q+tx$, for all $x\in M,\ t\in [0,1]$.
Clearly, if the order pair $(S,T)$ is a $\mathcal{JH\Im}$-suboperator pair,
then $(S,T)$ is a $\mathcal{JH}$-suboperator pair.
\end{remark}

\begin{definition}
\label{d4.4}Let $M$ is a subset of a normed space $X$, and $S$ and $T$ are
self-mappings on $M$. Suppose that $M$ has a family $\Im=\{f_{x}:[0,1]%
\rightarrow M: f_{x}(1)=x \}_{x\in M}$. Then the order pair $(S,T)$ is
called a generalized $\mathcal{JH\Im}$-suboperator pair with order $n$, if
for each $k\in [0,1]$, there exists a point $w=Sx=T_{k}x$ in $PC(S,T_{k})$
such that 
\begin{equation*}
\Vert w-x\Vert \leq (\delta(PC(S,T_{k})))^{n},
\end{equation*}
for some $n\in \mathbb{N}$. Where $T_{k}$ is self-mapping on $M$ such that $%
T_{k}x=f_{Tx}(k)$, for all $x\in M$.
\end{definition}

\begin{remark}
\label{r4.5} By Definition \ref{d4.4} if $(S,T)$ be a generalized $\mathcal{%
JH\Im}$-suboperator pair with order $n$ , then for each $k\in[0,1]$, $%
(S,T_{k})$ is a generalized $\mathcal{JH}$-operator pair with order $n$,
where $T_{k}$ is self-mapping on $M$ such that $T_{k}x=f_{Tx}(k)$, for all $%
x\in M $.
\end{remark}

\begin{remark}
\label{r4.6}Let $S$ and $T$ are self-mappings on a $q$-starshaped $M$ of a
normed space $X$. Suppose $M$ has a family $\Im=\{f_{x}:[0,1]\rightarrow
M\}_{x\in M}$, where $f_{x}(t)=(1-t)q+tx$, for all $x\in M,\ t\in [0,1]$.
Clearly, if the order pair $(S,T)$ is a generalized $\mathcal{JH\Im}$%
-suboperator pair with order $n$, then $(S,T)$ is a generalized $\mathcal{JH}
$-suboperator pair with order $n$.
\end{remark}

\begin{example}
\label{ex4.7} Let $X=\mathbb{R}$ with usual norm and $M=[0,\infty)$ which
has family $\Im=\{f_{x}:[0,1]\rightarrow M\}_{x\in M}$ such that $%
f_{x}(t)=(1-t)q+tx$, for all $t\in [0,1]$ and $x\in M$. Define $%
S,T:M\rightarrow M$ by 
\begin{equation*}
Sx=\left \{ 
\begin{array}{l}
1\ \ \ \ \ \ if\ x=0, \\ 
2x^{2}\ \ \ if \ x\neq 0, \ 
\end{array}
\right.\ \ \ \ \ Tx=\left \{ 
\begin{array}{l}
1\ \ \ \ if\ x=0, \\ 
x^{3}\ \ \ if\ x\neq 0.%
\end{array}
\right.
\end{equation*}

Then $M$ is $q$-starshaped for $q=0$ and $C(S,T)=\{0,2\}$, $PC(S,T)=\{1,8\}$%
, $T_{k}(x)=f_{Tx}(k)=(1-t)q+kTx=kTx$. Clearly, $(S,T)$ is a $\mathcal{JH}$%
-operator pair. Now, for $k=\frac{1}{2}$ we have $C(S,T_{k})=\{4\},
PC(I,T_{k})=\{32\}$ and

\begin{equation*}
\Vert 4-32\Vert= 28>0=(\delta(PC(S,T_{k })))^{n},
\end{equation*}
for each $n\in \mathbb{N}$. Hence $(S,T)$ is not a generalized $\mathcal{%
JH\Im}$-suboperator pair.
\end{example}

\begin{example}
Let $X=\mathbb{R}$ with usual norm and $M=[0,\infty)$ which has family $%
\Im=\{f_{x}:[0,1]\rightarrow M\}_{x\in M}$ such that $f_{x}(t)=(1-t)q+tx$,
for all $t\in [0,1]$ and $x\in M$. Define $S,T:M\rightarrow M$ by $Sx=x$ and 
$Tx=x^{3}$. Then $M$ is $q$-starshaped for $q=0$ and $C(S,T)=\{0,\pm1\}$, $%
PC(S,T)=\{0,\pm1\}$, $T_{k}x=f_{Tx}(k)=(1-k)q+kTx=kTx$, there are two cases.%
\newline

Case1. $k=0$. Thus, $T_{k}x=0$, $C(S,T_{k})=\{0\}$ and $PC(S,T_{k})=\{0\}$,
for $0\in PC(S,T_{k})$ we have

\begin{equation*}
\Vert T_{k}0-0\Vert=\Vert w-x\Vert=0\leq 0= (\delta(PC(S,T_{k})))^{n}.
\end{equation*}

Case2. $k\in (0,1]$. Thus, $C(S,T_{k})=\{0,\pm\sqrt{\frac{1}{k}}\}$ and $%
PC(S,T_{k})=\{0,\pm\sqrt{\frac{1}{k}}\}$, for $S\sqrt{\frac{1}{k}}=T_{k}%
\sqrt{\frac{1}{k}} =\sqrt{\frac{1}{k}}\in PC(S,T_{k})$ we have

\begin{equation*}
\Vert T_{k}(\sqrt{\frac{1}{k}})-\sqrt{\frac{1}{k}}\Vert=\Vert w-x
\Vert=0\leq (\delta(PC(S,T_{k}))^{n}
\end{equation*}

for all $n\in \mathbb{N}$.\newline
Thus the pair $(S,T)$ is a generalized $\mathcal{JH\Im}$-suboperator pair
with order $n=1,2,3,\cdots$.
\end{example}

\begin{theorem}
\label{t4.9} Let $M$ be a closed subset of a normed space $X$, $S$ and $T$
be self-mappings of $M$ such that $M=S(M)$. Suppose that $M$ has a
contractive jointly continuous family $\Im=\{f_{x}:x\in M\}$ and the order
pair $(S,T)$ is a generalized $\mathcal{JH\Im}$-suboperator pair with order $%
n_{0}$ and for all $x,y\in M$,

\begin{equation}
\psi(\int_{0}^{\Vert Tx-Ty\Vert}\phi(t)dt)\leq F\Big(\psi(\int_{0}^{\frac{1}{%
\Phi(t)}\gamma(m_{4}(x,y))}\phi(t)dt), \varphi(\int_{0}^{\frac{1}{\Phi(t)}%
\gamma(m_{4}(x,y))}\phi(t)dt)\Big),  \label{eq4.17}
\end{equation}

for all function $\Phi:(0,1)\rightarrow (0,1)$, where $\psi \in \Psi,
\varphi \in \Phi_{u}, F\in \mathcal{C}$, such that $(\psi ,\varphi ,F)$ is
monotone. Then $M\cap \text{Fix}(S)\cap \text{Fix}(T)\neq\emptyset$ provided
one of the following conditions holds:

\begin{itemize}
\item[(3a)] $cl(T(M))$ is compact and $S$ and $T$ are continuous;

\item[(3b)] $M$ is bounded, $T$ and $S$ are continuous and $S$ is
hemicompact;

\item[(3c)] $wcl(T(M))$ is weakly compact, $S$ is weakly continuous and $%
(S-T)$ is demiclosed at 0.
\end{itemize}
\end{theorem}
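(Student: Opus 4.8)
The plan is to follow the approximating-map technique from the proof of Theorem \ref{t3.5}, replacing the segment maps $(1-k)q+kT$ by the maps furnished by the family $\Im$. First I would fix a sequence $k_{n}\in(0,1)$ with $k_{n}\rightarrow 1$ and define $T_{n}:M\rightarrow M$ by $T_{n}x=f_{Tx}(k_{n})$. Each $f_{Tx}$ maps into $M$, so $T_{n}$ is a self-map of $M$, and $M=S(M)$ supplies the range condition $T_{n}(M)\subseteq M=S(M)$ needed to run the coincidence-point iteration. Since $(S,T)$ is a generalized $\mathcal{JH\Im}$-suboperator pair with order $n_{0}$, Remark \ref{r4.5} gives that for every $n$ the pair $(S,T_{n})$ is a generalized $\mathcal{JH}$-operator pair with order $n_{0}$; this is exactly hypothesis $(i)$ of Theorem \ref{t3.4}.

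Second, I would verify the integral contractive condition $(ii)$ of Theorem \ref{t3.4} for each $(S,T_{n})$. Contractiveness of $\Im$ gives $\Vert T_{n}x-T_{n}y\Vert=\Vert f_{Tx}(k_{n})-f_{Ty}(k_{n})\Vert\leq\Phi(k_{n})\Vert Tx-Ty\Vert$, so since $\psi$ is nondecreasing and $\phi\geq 0$ it suffices to bound $\psi(\int_{0}^{\Phi(k_{n})\Vert Tx-Ty\Vert}\phi(t)dt)$. Inserting (\ref{eq4.17}) and passing the factor $\Phi(k_{n})$ through the integrals, the built-in factor $\tfrac{1}{\Phi(k_{n})}$ cancels it exactly as $\tfrac1k$ cancels $k_{n}$ in (\ref{eq3.14}), leaving the upper limit $\gamma(m_{4}(x,y))$. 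Finally, since $T_{n}x=f_{Tx}(k_{n})\in Y_{f_{Tx}(0)}^{Tx}$, each distance term in $m_{4}$ is dominated by the corresponding term of the $m_{2}$-maximum formed from $S$ and $T_{n}$, e.g. $dist(Sx,Y_{f_{Tx}(0)}^{Tx})\leq\Vert Sx-T_{n}x\Vert$; together with $\gamma$ nondecreasing and $(\psi,\varphi,F)$ monotone this yields condition $(ii)$ of Theorem \ref{t3.4} for $(S,T_{n})$. Theorem \ref{t3.4} then produces $x_{n}\in M$ with $Sx_{n}=T_{n}x_{n}=x_{n}$, that is $x_{n}\in\text{Fix}(S)\cap\text{Fix}(T_{n})$.

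The last and hardest step is the passage to the limit $k_{n}\rightarrow 1$ under each of $(3a)$, $(3b)$, $(3c)$. Under $(3a)$, compactness of $cl(T(M))$ yields a subsequence with $Tx_{m}\rightarrow z$; joint continuity of $\Im$ then forces $x_{m}=f_{Tx_{m}}(k_{m})\rightarrow f_{z}(1)=z$, whence continuity of $S$ and $T$ gives $Sz=z=Tz$ and $z\in M$ since $M$ is closed. Under $(3b)$ I would first show $\Vert x_{n}-Tx_{n}\Vert=\Vert f_{Tx_{n}}(k_{n})-f_{Tx_{n}}(1)\Vert\rightarrow 0$ using joint continuity at $t=1$ on the bounded set $M$, and then use hemicompactness of $S$ with continuity of $S$ and $T$. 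The genuinely delicate case is $(3c)$: with only weak compactness of $wcl(T(M))$ one extracts $x_{m}\rightarrow y$ weakly, gets $Sy=y$ from weak continuity of $S$, shows $\Vert x_{m}-Tx_{m}\Vert\rightarrow 0$, and then applies demiclosedness of $S-T$ at $0$ to the sequence $Sx_{m}-Tx_{m}=x_{m}-Tx_{m}$ to conclude $Sy=Ty$, hence $Ty=y$. The main obstacle I anticipate is controlling the convergence $f_{Tx_{n}}(k_{n})\rightarrow Tx_{n}$ and the quantity $\Vert x_{n}-Tx_{n}\Vert$ for an abstract contractive family $\Im$ rather than the explicit segment used in Theorems \ref{t3.5}--\ref{t3.6}; once joint continuity (together with the relevant compactness) is used to handle this, the limiting arguments parallel those of Theorem $2.2$ of \cite{NHBE} and Theorem $2.2$ of \cite{BP}.
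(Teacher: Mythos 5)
Your overall skeleton is exactly the paper's: define $T_{n}x=f_{Tx}(k_{n})$ with $k_{n}\rightarrow 1$, use contractiveness of $\Im$ to cancel $\Phi(k_{n})$ against the built-in $\frac{1}{\Phi(\cdot)}$ factor and dominate the terms of $m_{4}$ by the corresponding $T_{n}$-maximum (via $T_{n}x\in Y_{f_{Tx}(0)}^{Tx}$), invoke Remark \ref{r4.5} and Theorem \ref{t3.4} to obtain $x_{n}=Sx_{n}=T_{n}x_{n}$, and then pass to the limit; your treatment of $(3a)$ coincides with the paper's.

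However, your handling of $(3b)$ contains a step that would fail as stated. You propose to first prove $\Vert x_{n}-Tx_{n}\Vert=\Vert f_{Tx_{n}}(k_{n})-f_{Tx_{n}}(1)\Vert\rightarrow 0$ ``using joint continuity at $t=1$ on the bounded set $M$''. Joint continuity of $\Im$ only yields $f_{\alpha_{n}}(t_{n})\rightarrow f_{\alpha_{0}}(t_{0})$ when $\alpha_{n}\rightarrow\alpha_{0}$ \emph{strongly} in $M$ and $t_{n}\rightarrow t_{0}$; since the points $Tx_{n}$ need not converge, boundedness of $M$ gives you nothing (the family need not be equicontinuous in the index $\alpha$). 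Moreover the step is both unnecessary and misdirected: hemicompactness of $S$ is triggered by $\Vert x_{n}-Sx_{n}\Vert\rightarrow 0$, which holds trivially because $x_{n}=Sx_{n}$, not by $\Vert x_{n}-Tx_{n}\Vert\rightarrow 0$. The paper's order of operations is the correct one: first extract $x_{m}\rightarrow y$ strongly by hemicompactness of $S$, then continuity of $T$ gives $Tx_{m}\rightarrow Ty$, and only now does joint continuity apply, giving $x_{m}=f_{Tx_{m}}(k_{m})\rightarrow f_{Ty}(1)=Ty$, whence $y=Ty$ by uniqueness of limits and $y=Sy$ by continuity of $S$. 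The same issue resurfaces in $(3c)$: demiclosedness of $S-T$ at $0$ requires $Sx_{m}-Tx_{m}=x_{m}-Tx_{m}\rightarrow 0$ strongly, and for an abstract contractive jointly continuous family this cannot be deduced from weak convergence of $x_{m}$ alone (for the starshaped family of Theorem \ref{t3.6} it follows from $x_{m}-Tx_{m}=(1-k_{m})(q-Tx_{m})$ and boundedness of $T(M)$). You honestly flag this as the main obstacle but do not resolve it; to be fair, the paper's own proof of $(3c)$ merely asserts $\lim_{m\rightarrow\infty}(Sx_{m}-Tx_{m})=0$ without justification, so this particular gap is shared with, and inherited from, the paper.
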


\begin{proof}
Define $T_{n}:M\rightarrow M$ by $T_{n}(x)=f_{Tx}(k_{n})$, $x\in M$ and $%
\{k_{n}\}$ is a sequence in $(0,1)$ such that $k_{n}\rightarrow1$. Then $%
T_{n}$ is a well defined mapping on $M$ and for each $n\geq1$, $%
cl(T_{n}(M))\subseteq cl(M)=M=S(M)$ and $T_{n}(x)=f_{Tx}(k_{n})\in
Y_{f_{Tx}(0)}^{Tx}$ for all $x\in M$ and $n\geq1$. By the contractiveness of
the family $\Im$, we get

\begin{equation*}
\begin{array}{rl}
& \psi(\int_{0}^{\Vert T_{n}x-T_{n}y\Vert}\phi(t)dt ) \\ 
& =\psi(\int_{0}^{\Vert f_{Tx}(k_{n}) -f_{Ty}(k_{n})\Vert}\phi(t)dt) \\ 
& \leq \psi(\int_{0}^{\Phi(k_{n})\Vert Tx-Ty\Vert}\phi(t)dt) \\ 
& \leq F\Big(\psi(\int_{0}^{\Phi(k_{n})\frac{1}{\Phi(k_{n})}\gamma
(m_{4}(x,y))}\phi(t)dt), \varphi(\int_{0}^{\Phi(k_{n})\frac{1}{\Phi(k_{n})}%
\gamma (m_{4}(x,y))}\phi(t)dt)\Big) \\ 
& =F\Big(\psi(\int_{0}^{\gamma (m_{4}(x,y))}\phi(t)dt),
\varphi(\int_{0}^{\gamma (m_{4}(x,y))}\phi(t)dt)\Big) \\ 
& \leq F\Big(\psi(\int_{0}^{\gamma(max\{\Vert Sx-Sy\Vert,\Vert
Sx-T_{n}x\Vert,\Vert Sy-T_{n}y\Vert,\frac{1}{2}[\Vert Sx-T_{n}y\Vert+\Vert
Sy-T_{n}x\Vert]\})}\phi(t)dt), \\ 
& \varphi (\int_{0}^{\gamma(max\{\Vert Sx-Sy\Vert,\Vert Sx-T_{n}x\Vert,\Vert
Sy-T_{n}y\Vert,\frac{1}{2}[\Vert Sx-T_{n}y\Vert+\Vert
Sy-T_{n}x\Vert]\})}\phi(t)dt)\Big),%
\end{array}%
\end{equation*}

for all $x,y\in M$. As $(S,T)$ is generalized $\mathcal{JH\Im}$-suboperator
pair with order $n_{0}$, then by Remark \ref{r4.5}, $(S,T_{n})$ is
generalized $\mathcal{JH}$-operator pair with order $n_{0}$, hence by
Theorem \ref{t3.4}, there exists $x_{n}\in M$ such that $%
x_{n}=T_{n}x_{n}=Sx_{n}$ for each $n$. Now we have\newline
$(3a)$ The compactness of $cl(T(x))$ implies that there exists a subsequence 
$\{Tx_{m}\}$ of $\{Tx_{n}\}$ such that $Tx_{m}\rightarrow y,\ y\in M$. Since 
$\Im$ is jointly continuous, then 
\begin{equation}  \label{eq4.45}
x_{m}=T_{m}x_{m}=f_{Tx_{m}}(k_{m})\rightarrow f_{y}(1)=y,
\end{equation}
and so by the continuity of $T$ and $S$, we have $y\in \text{Fix}(S)\cap 
\text{Fix}(T)$, hence $M\cap \text{Fix}(S)\cap \text{Fix}(T)\neq\emptyset$.%
\newline

$(3b)$ As above there exists $x_{n}\in M$ such that $x_{n}=T_{n}x_{n}=Sx_{n}$
for each $n$. Since $x_{n}$ is bounded, $\Vert x_{n}-Sx_{n}\Vert\rightarrow 0
$, so by the hemicompactness of $S$, $\{x_{n}\}$ has a subsequence $\{x_{m}\}
$ converging strongly to $y\in M$. As $T$ is continuous, $Tx_{m}\rightarrow
Ty$. Also

\begin{equation*}
x_{m}=T_{m}x_{m}=f_{Tx_{m}}(k_{m})\rightarrow f_{Ty}(1)=Ty.
\end{equation*}

By the uniqueness of the limit, we get $y=Ty$. The result now follows as in $%
(3a)$.\newline

$(3c)$ From weakly compactness of $wcl(T(M))$ there exists a subsequence $%
\{x_{m}\}$ of $\{x_{n}\}$ in $M$ converging weakly to $y\in M$ as $%
m\rightarrow \infty$. Since $S$ is weakly continuous, $Sy=y$ that is $%
\lim_{m\rightarrow \infty}(Sx_{m}-Tx_{m})=0$. It follows from $(S-T)$ is
demiclosed at $0$ that $Sy-Ty=0$. Therefore, $y=Sy=Ty$ this means that $%
M\cap \text{Fix}(S)\cap \text{Fix}(T)\neq\emptyset$.
\end{proof}


\section{Invariant approximation}

\begin{theorem}
\label{t5.1} Let $M$ be a subset of a normed space $X $ and $S$ and $T$ be
self-mappingings of $X$ such that $u\in \text{Fix}(S)\cap \text{Fix}(T)$ for
some $u\in X$ and $T(\partial M\cap M)\subseteq M$. Assume that $%
S(P_{M}(u))=P_{M}(u)$, $P_{M}(u)$ is closed $q$-starshaped, $q\in \text{Fix}%
(S)$, the order pair $(S,T)$ is generalized $\mathcal{JH}$-suboperator pair
with order $n_{0}$ on $P_{M}(u)$. Suppose that for all $x,y\in P_{M}(u)\cup
\{u\}$, \newline

$\psi(\int_{0}^{\Vert Tx-Ty\Vert}\phi(t)dt)\leq \label{eq5.19} $

\begin{equation}
\left \{ 
\begin{array}{l}
F\Big(\psi(\int_{0}^{\Vert Sx-Sy\Vert}\phi(t)dt), \varphi(\int_{0}^{\Vert
Sx-Sy\Vert}\phi(t)dt)\Big) \ \ \ \ \ \ \ \ \ \ \ \ \ \ if\ y=u, \\ 
\\ 
F\Big(\psi(\int_{0}^{\frac{1}{k}\gamma(m_{3}(x,y))}\phi(t)dt),
\varphi(\int_{0}^{\frac{1}{k}\gamma(m_{3}(x,y))}\phi(t)dt)\Big)\ \ \ \ \ \ \
\ if\ y\in P_{M}(u),%
\end{array}
\right.  \label{eq5.19}
\end{equation}

for each $k\in (0,1)$, where $\psi \in \Psi, \varphi \in \Phi_{u}, F\in 
\mathcal{C}$, such that $(\psi ,\varphi ,F)$ is monotone then $P_{M}(u)\cap 
\text{Fix}(S)\cap \text{Fix}(T)\neq\emptyset$, provided one of the following
conditions holds:

\begin{itemize}
\item[(4a)] $cl(T(P_{M}(u)))$ is compact, $S$ and $T$ are continuous on $%
P_{M}(u)$;

\item[(4b)] $wcl(T(P_{M}(u)))$ is weakly compact, $T$ is completely
continuous and $S$ is continuous;

\item[(4c)] $wcl(T(P_{M}(u)))$ is weakly compact, $S$ is weakly continuous
and $(S-T)$ is demiclosed at 0;

\item[(4d)] $P_{M}(u)$ is bounded and complete, $T$ is hemicompact and $S$
is continuous.
\end{itemize}
\end{theorem}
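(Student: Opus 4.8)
The plan is to reduce this invariant-approximation statement to the abstract common fixed point theorem already proved in the paper, namely Theorem~\ref{t3.5}, by showing that $S$ and $T$ restrict to self-mappings of $P_M(u)$ satisfying all of its hypotheses. First I would verify that $T$ maps $P_M(u)$ into itself. This is the step where the geometric assumptions $u\in\mathrm{Fix}(S)\cap\mathrm{Fix}(T)$ and $T(\partial M\cap M)\subseteq M$ are consumed: for $x\in P_M(u)$ one has $\|Sx-u\|=\mathrm{dist}(u,M)$, and using the case $y=u$ of inequality~(\ref{eq5.19}) together with the properties $F(s,t)\leq s$ and monotonicity of $\psi$, I would estimate
\begin{equation*}
\psi\Big(\int_{0}^{\|Tx-u\|}\phi(t)\,dt\Big)
=\psi\Big(\int_{0}^{\|Tx-Tu\|}\phi(t)\,dt\Big)
\leq \psi\Big(\int_{0}^{\|Sx-Su\|}\phi(t)\,dt\Big),
\end{equation*}
whence $\|Tx-u\|\leq\|Sx-Su\|=\|Sx-u\|=\mathrm{dist}(u,M)$. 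Since $x$ lies in $P_M(u)\subseteq M$, standard boundary reasoning (if $x$ were interior the distance to $u$ could be strictly decreased) forces $x\in\partial M\cap M$, so $Tx\in M$ by hypothesis; combined with $\|Tx-u\|\leq\mathrm{dist}(u,M)$ this yields $Tx\in P_M(u)$. Thus $T:P_M(u)\to P_M(u)$, and $S(P_M(u))=P_M(u)$ is assumed outright.

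Next I would confirm that the hypotheses of Theorem~\ref{t3.5} hold on the set $M':=P_M(u)$. By assumption $M'$ is closed and $q$-starshaped with $q\in\mathrm{Fix}(S)$, and $(S,T)$ is a generalized $\mathcal{JH}$-suboperator pair with order $n_0$ on $M'$, which is exactly hypothesis~(i) of Theorem~\ref{t3.5}. The contractive inequality~(ii) of Theorem~\ref{t3.5}, namely the bound involving $\tfrac1k\gamma(m_3(x,y))$ for $k\in(0,1)$, is precisely the second branch $y\in P_M(u)$ of~(\ref{eq5.19}); since we have just shown $T$ is a genuine self-map of $M'$, all the quantities $m_3(x,y)$ are formed from points of $M'$ and the inequality applies verbatim. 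The triple $(\psi,\varphi,F)$ is monotone by hypothesis.

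It then remains to match the four regularity alternatives. Conditions (4a)--(4d) are the respective specializations of (1a)--(1d) in Theorem~\ref{t3.5} with $M$ replaced by $P_M(u)$: compactness of $cl(T(P_M(u)))$ with continuity of $S,T$; weak compactness of $wcl(T(P_M(u)))$ with $T$ completely continuous and $S$ continuous; weak compactness with $S$ weakly continuous and $(S-T)$ demiclosed at $0$; and boundedness and completeness of $P_M(u)$ with $T$ hemicompact and $S$ continuous. Under any one of them Theorem~\ref{t3.5} furnishes a point $z\in P_M(u)$ with $z=Sz=Tz$, so $z\in P_M(u)\cap\mathrm{Fix}(S)\cap\mathrm{Fix}(T)$, proving the claim. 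I expect the only genuine obstacle to be the self-map verification in the first paragraph: one must justify carefully that $x\in P_M(u)$ actually lies in $\partial M\cap M$ so that $T(\partial M\cap M)\subseteq M$ can be invoked, a point that is routine when $u\notin M$ and $\mathrm{dist}(u,M)>0$ but requires the boundary argument to be spelled out.
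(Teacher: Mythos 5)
Your proposal is correct and follows essentially the same route as the paper: first show $P_M(u)$ is $T$-invariant by combining the boundary argument (points $kx+(1-k)u$, $k\in(0,1)$, lie strictly closer to $u$ than $\mathrm{dist}(u,M)$, forcing $x\in\partial M\cap M$ and hence $Tx\in M$) with the $y=u$ branch of (\ref{eq5.19}) to get $\Vert Tx-u\Vert\leq\Vert Sx-u\Vert=\mathrm{dist}(u,M)$, and then invoke Theorem \ref{t3.5} with $M$ replaced by $P_M(u)$, matching (4a)--(4d) to (1a)--(1d). The paper's proof is exactly this, with the boundary step spelled out via the segment argument you sketch parenthetically.
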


\begin{proof}
Let $x\in P_{M}(u)$. It follows from $\Vert kx+(1-k)u-u\Vert=k\Vert
x-u\Vert<d(u,M)$ for all $k\in(0,1)$ that $\{kx+(1-k)u:k\in(0,1)\}\cap
M=\emptyset$ which implies that $x\in \partial M\cap M$. Since $T(\partial
M\cap M)\subseteq M$, $Tx$ must be in $M $. Also $Sx\in P_{M}(u)$, $u\in 
\text{Fix}(S)\cap \text{Fix}(T)$ and $(S,T)$ satisfies (\ref{eq5.19}), we
have

\begin{equation*}
\Vert Tx-u\Vert=\Vert Tx-Tu\Vert\leq\Vert Sx-Su\Vert=\Vert Sx-u\Vert\leq
d(u,M).
\end{equation*}

This implies that $Tx\in P_{M}(u)$. Consequently, $P_{M}(u)$ is $T$%
-invariant. Since all the conditions of Theorem \ref{t3.5} are satisfied
with $M=P_{M}(u)$, hence $P_{M}(u)\cap \text{Fix}(S)\cap \text{Fix}%
(T)\neq\emptyset$.
\end{proof}

\begin{theorem}
\label{t5.2} Let $M$ be a subset of a normed space $X $ and $S$ and $T$ be
self-mappingings of $X$ such that $u\in \text{Fix}(S)\cap \text{Fix}(T)$ for
some $u\in X$ and $T(\partial M\cap M)\subseteq M$. Assume that $%
S(P_{M}(u))=P_{M}(u)$, $P_{M}(u)$ is closed $q$-starshaped, $q\in \text{Fix}%
(S)$, the order pair $(S,T)$ is generalized $\mathcal{JH}$-suboperator pair
with order $n_{0}$ on $P_{M}(u)$. Suppose that for all $x,y\in P_{M}(u)\cup
\{u\}$, \newline

$\psi(\int_{0}^{\Vert Tx-Ty\Vert}\phi(t)dt)\leq$

\begin{equation}
\left \{ 
\begin{array}{l}
F\Big(\psi(\int_{0}^{\Vert Sx-Sy\Vert }\phi(t)dt), \varphi(\int_{0}^{\Vert
Sx-Sy\Vert }\phi(t)dt)\Big) \ \ \ \ \ \ \ \ \ \ \ \ \ \ \ if \ y=u, \\ 
\\ 
F\Big(\psi(\int_{0}^{\frac{1}{k}\gamma (\Vert Sx-Sy\Vert)}\phi(t)dt),
\varphi(\int_{0}^{\frac{1}{k}\gamma (\Vert Sx-Sy\Vert)}\phi(t)dt) \ \ \ \ \
\ \ if\ y\in P_{M}(u),%
\end{array}
\right.  \label{eq5.20}
\end{equation}

for each $k\in(0,1)$, where $\gamma$ is continuous, $\psi \in \Psi, \varphi
\in \Phi_{u}, F\in \mathcal{C}$, such that $(\psi ,\varphi ,F)$ is strictly
monotone. Now if $wcl(T(P_{M}(u)))$ is weakly compact, $S$ is weakly
continuous and $X$ satisfies Opial's condition, Then $P_{M}(u)\cap \text{Fix}%
(S)\cap \text{Fix}(T)\neq\emptyset$.
\end{theorem}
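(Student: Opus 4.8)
The plan is to follow the two-stage structure of the proof of Theorem~\ref{t5.1}: first establish that $P_{M}(u)$ is $T$-invariant (the approximation-theoretic part), and then apply Theorem~\ref{t3.6} with $M$ replaced by $P_{M}(u)$ (the fixed-point part). Throughout I write $d(u,M)$ for $dist(u,M)$.

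First I would prove $T(P_{M}(u))\subseteq P_{M}(u)$. Fix $x\in P_{M}(u)$, so that $\|x-u\|=d(u,M)$. For every $k\in(0,1)$ one computes $\|kx+(1-k)u-u\|=k\|x-u\|=k\,d(u,M)<d(u,M)$, so $kx+(1-k)u\notin M$; letting $k\to1^{-}$ exhibits $x$ as a limit of points outside $M$, whence $x\in\partial M\cap M$. Since $T(\partial M\cap M)\subseteq M$, we obtain $Tx\in M$. Next I invoke the first branch of \eqref{eq5.20} with $y=u$: because $F(s,t)\le s$ and $\psi$ is non-decreasing, the integral inequality collapses to $\|Tx-Tu\|\le\|Sx-Su\|$. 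Using $u=Tu=Su$ and $Sx\in S(P_{M}(u))=P_{M}(u)$ this gives $\|Tx-u\|\le\|Sx-u\|=d(u,M)$; combined with $Tx\in M$, the definition of $d(u,M)$ forces $\|Tx-u\|=d(u,M)$, i.e. $Tx\in P_{M}(u)$. Hence $P_{M}(u)$ is $T$-invariant, and together with $S(P_{M}(u))=P_{M}(u)$ both $S$ and $T$ are genuine self-mappings of $P_{M}(u)$.

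With invariance established, I would verify that all hypotheses of Theorem~\ref{t3.6} hold with $M$ replaced by the closed $q$-starshaped set $P_{M}(u)$: condition (i) is the assumption that $(S,T)$ is a generalized $\mathcal{JH}$-suboperator pair with order $n_{0}$ on $P_{M}(u)$ together with $q\in\text{Fix}(S)$; condition (ii) is precisely the second branch of \eqref{eq5.20}, which is inequality \eqref{eq3.15} restricted to $x,y\in P_{M}(u)$, with $\gamma$ continuous and $(\psi,\varphi,F)$ strictly monotone; and condition (iii) is the stated weak compactness of $wcl(T(P_{M}(u)))$, weak continuity of $S$, and Opial's condition on $X$. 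Theorem~\ref{t3.6} then yields a point of $P_{M}(u)\cap\text{Fix}(S)\cap\text{Fix}(T)$, which is exactly the desired conclusion.

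The second stage is routine, since the restricted hypotheses match \eqref{eq3.15} verbatim on $P_{M}(u)$. The step I expect to require the most care is the $T$-invariance argument, and specifically the passage from the integral form of the $y=u$ branch to the plain norm inequality $\|Tx-Tu\|\le\|Sx-Su\|$. This collapse rests on the defining property of $C$-class functions (so that $F(\psi(a),\varphi(a))<\psi(a)$ whenever $a>0$, using that $\varphi\in\Phi_{u}$ is positive on $(0,\infty)$), on the monotonicity of $\psi$, and on the positivity condition \eqref{eq2.1} for $\phi$, which lets one recover the ordering of the integration limits from the ordering of the integrals. Once this inequality is combined with the boundary characterization of points of $P_{M}(u)$ to pin down $\|Tx-u\|=d(u,M)$, the theorem reduces cleanly to an application of Theorem~\ref{t3.6}.
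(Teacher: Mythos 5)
Your proposal is correct and takes essentially the same approach as the paper: the paper's own proof of this theorem consists of the single line ``by Theorem~\ref{t3.6} and the same proof of Theorem~\ref{t5.1},'' which is precisely your two-stage argument (establish $T$-invariance of $P_{M}(u)$ via the boundary argument and the $y=u$ branch of \eqref{eq5.20}, then apply Theorem~\ref{t3.6} with $M$ replaced by $P_{M}(u)$). If anything, your treatment of the collapse from the integral inequality to $\Vert Tx-u\Vert \leq \Vert Sx-u\Vert$ (via the $C$-class property, the positivity of $\varphi$, and condition \eqref{eq2.1}) is more careful than the paper's, which asserts that norm inequality without comment.
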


\begin{proof}
By Theorem \ref{t3.6} and the same proof of Theorem \ref{t5.1} is concluded
that $P_{M}(u)\cap \text{Fix}(S)\cap \text{Fix}(T)\neq\emptyset$.
\end{proof}

\begin{theorem}
\label{t5.3} Let $M$ be a subset of a normed space $X $ and $S$ and $T$ be
self-mappingings of $X$ such that $u\in \text{Fix}(S)\cap \text{Fix}(T)$ for
some $u\in X$ and $T(\partial M\cap M)\subseteq M$. Assume that $%
S(P_{M}(u))=P_{M}(u)$, $P_{M}(u)$ is closed $q$-starshaped, $q\in \text{Fix}%
(S)$, the order pair $(S,T)$ is generalized $\mathcal{JH}$-suboperator pair
with order $n_{0}$ on $P_{M}(u)$. Suppose that for all $x,y\in
C_{M}^{S}(u)\cup \{u\}$, \newline

$\psi(\int_{0}^{\Vert Tx-Ty\Vert}\phi(t)dt)\leq$

\begin{equation}
\left \{ 
\begin{array}{l}
F\Big(\psi(\int_{0}^{\Vert Sx-Sy\Vert}\phi(t)dt), \varphi(\int_{0}^{\Vert
Sx-Sy\Vert}\phi(t)dt)\Big) \ \ \ \ \ \ \ \ \ \ \ \ \ if\ y=u, \\ 
\\ 
F\Big(\psi(\int_{0}^{\frac{1}{k}\gamma(m_{3}(x,y))}\phi(t)dt),
\varphi(\int_{0}^{\frac{1}{k}\gamma(m_{3}(x,y))}\phi(t)dt)\Big) \ \ \ \ \ \
\ if\ y\in C_{M}^{S}(u),%
\end{array}
\right.  \label{eq5.21}
\end{equation}

for each $k\in (0,1)$, where $\psi \in \Psi, \varphi \in \Phi_{u}, F\in 
\mathcal{C}$, such that $(\psi ,\varphi ,F)$ is monotone. Now if $%
cl(T(C_{M}^{S}(u)))$ is compact, $S$ and $T$ are continuous on $C_{M}^{S}(u)$%
, then $P_{M}(u)\cap \text{Fix}(S)\cap \text{Fix}(T)\neq\emptyset$.
\end{theorem}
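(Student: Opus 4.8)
The plan is to deduce Theorem~\ref{t5.3} from Theorem~\ref{t5.1} (equivalently, from Theorem~\ref{t3.5} under condition $(1a)$) by checking that every hypothesis imposed on the larger set $C_{M}^{S}(u)$ descends to the well-structured set $P_{M}(u)$. The pivot of the whole argument is the inclusion $P_{M}(u)\subseteq C_{M}^{S}(u)$, which I would establish first: if $p\in P_{M}(u)$ then $p\in M$, and since $S(P_{M}(u))=P_{M}(u)$ we have $Sp\in P_{M}(u)$, so $p\in C_{M}^{S}(u)$ by the very definition of $C_{M}^{S}(u)$.

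With this inclusion three reductions follow immediately. First, inequality (\ref{eq5.21}) holds for all $x,y\in C_{M}^{S}(u)\cup\{u\}$, hence in particular for all $x,y\in P_{M}(u)\cup\{u\}$, and on that smaller set its two branches are exactly the two branches of inequality (\ref{eq5.19}) of Theorem~\ref{t5.1} (the second branch, written for $y\in C_{M}^{S}(u)$, covers $y\in P_{M}(u)$ precisely because $P_{M}(u)\subseteq C_{M}^{S}(u)$). Second, $T(P_{M}(u))\subseteq T(C_{M}^{S}(u))$, so $cl(T(P_{M}(u)))$ is a closed subset of the compact set $cl(T(C_{M}^{S}(u)))$ and is therefore compact. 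Third, continuity of $S$ and $T$ on $C_{M}^{S}(u)$ forces their continuity on $P_{M}(u)$. All remaining hypotheses ($u\in\text{Fix}(S)\cap\text{Fix}(T)$, $T(\partial M\cap M)\subseteq M$, $S(P_{M}(u))=P_{M}(u)$, $P_{M}(u)$ closed $q$-starshaped with $q\in\text{Fix}(S)$, and $(S,T)$ a generalized $\mathcal{JH}$-suboperator pair of order $n_{0}$ on $P_{M}(u)$) are shared verbatim with Theorem~\ref{t5.1} under condition $(4a)$, so Theorem~\ref{t5.1} yields $P_{M}(u)\cap\text{Fix}(S)\cap\text{Fix}(T)\neq\emptyset$.

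If a self-contained argument is preferred to a citation, I would reproduce the one content-bearing step of Theorem~\ref{t5.1}, namely the $T$-invariance of $P_{M}(u)$: for $x\in P_{M}(u)$ the points $kx+(1-k)u$, $k\in(0,1)$, satisfy $\|kx+(1-k)u-u\|=k\,dist(u,M)<dist(u,M)$, so they lie outside $M$ and force $x\in\partial M\cap M$; then $Tx\in M$ by $T(\partial M\cap M)\subseteq M$, and the $y=u$ branch of (\ref{eq5.21}), handled exactly as in Theorem~\ref{t5.1}, gives $\|Tx-u\|=\|Tx-Tu\|\leq\|Sx-Su\|=\|Sx-u\|=dist(u,M)$, whence $Tx\in P_{M}(u)$. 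Now $P_{M}(u)$ is both $S$-invariant (by hypothesis) and $T$-invariant, so Theorem~\ref{t3.5}$(1a)$ applies on $M=P_{M}(u)$ and delivers the common fixed point inside $P_{M}(u)$.

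The only real content is this invariance step; the rest is bookkeeping. I expect the main subtlety to be justifying why one works on $P_{M}(u)$ rather than on $C_{M}^{S}(u)$ itself: running Theorem~\ref{t3.5} directly on $C_{M}^{S}(u)$ would require that set to be $q$-starshaped and $T$-invariant, and neither is clear when $S$ is not affine, so descending to the closed $q$-starshaped set $P_{M}(u)$ is the safe route. The other point worth double-checking is that the compactness hypothesis genuinely transfers, which it does because a closed subset of a compact set is compact.
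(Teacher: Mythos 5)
Your proof is correct, and it takes a genuinely different route from the paper's. The paper keeps $C_{M}^{S}(u)$ as the working set: it claims $C_{M}^{S}(u)\subseteq P_{M}(u)$ by invoking $S(C_{M}^{S}(u))=C_{M}^{S}(u)$ (which is not among the stated hypotheses), establishes $T(C_{M}^{S}(u))\subseteq C_{M}^{S}(u)$ by strengthening the boundary condition to $T(\partial M\cap M)\subseteq M\cap S(M)$ (also not stated), and then applies Theorem \ref{t3.5}$(1a)$ with $M=C_{M}^{S}(u)$ --- which tacitly requires $C_{M}^{S}(u)$ to be $q$-starshaped and $(S,T)$ to be a generalized $\mathcal{JH}$-suboperator pair on it, neither of which is verified. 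You go the opposite way: your inclusion $P_{M}(u)\subseteq C_{M}^{S}(u)$ follows from the hypothesis that actually is stated, $S(P_{M}(u))=P_{M}(u)$, and you use it only to restrict the contraction condition (\ref{eq5.21}), the compactness of $cl(T(C_{M}^{S}(u)))$, and the continuity assumptions down to $P_{M}(u)$, where every hypothesis of Theorem \ref{t5.1} under $(4a)$ then holds verbatim. What your route buys is soundness with respect to the statement as written: Theorem \ref{t5.3} becomes a formal corollary of Theorem \ref{t5.1} (equivalently of Theorem \ref{t3.5}$(1a)$ on $P_{M}(u)$, via the invariance step you reproduce), with no appeal to unstated hypotheses; indeed the subtlety you flag at the end --- that one cannot run Theorem \ref{t3.5} directly on $C_{M}^{S}(u)$ without knowing it is starshaped and $T$-invariant --- is precisely the gap in the paper's own proof. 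What the paper's route would buy, if its extra hypotheses were added to the statement, is a genuine role for the set $C_{M}^{S}(u)$ in the spirit of Al-Thagafi--Shahzad-type results, since then the fixed point argument is localized to $C_{M}^{S}(u)$; note that if both invariance hypotheses held simultaneously, the two sets would coincide and the two proofs would merge. Your argument shares with the paper the step of deducing $\Vert Tx-u\Vert \leq \Vert Sx-u\Vert$ from the integral inequality, so it sits at exactly the same level of rigor as the paper's proof of Theorem \ref{t5.1}.
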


\begin{proof}
Let $x\in C_{M}^{S}(u)$. By definition of $C_{M}^{S}(u)$ and $%
S(C_{M}^{S}(u))=C_{M}^{S}(u)$, we have $C_{M}^{S}(u)\subseteq P_{M}(u)$.
Applying the same argument as in Theorem \ref{t5.1} shows that there exists $%
x\in\partial M\cap M$. It follows from $T(\partial M\cap M)\subseteq M\cap
S(M)$ that $Tx\in S(M)$. Therefore there exists $z\in M$ such that $Tx=Sz$.
Thus $z\in C_{M}^{S}(u)$ which implies that $T(C_{M}^{S}(u)\subseteq
S(C_{M}^{S}(u))=C_{M}^{S}(u)$. Now, result follows from Theorem \ref{t3.5} $%
(1a)$ with $M=C_{M}^{S}(u)$.
\end{proof}

\begin{theorem}
\label{t5.4} Let $M$ be a subset of a normed space $X $ and $S$ and $T$ be
self-mappingings of $X$ such that $u\in \text{Fix}(S)\cap \text{Fix}(T)$ for
some $u\in X$ and $T(\partial M\cap M)\subseteq M$. Assume that $%
S(P_{M}(u))=P_{M}(u)$ and $S$ and $T$ satisfy for all $x,y\in P_{M}(u)\cup
\{u\}$, \newline

$\psi(\int_{0}^{\Vert Tx-Ty\Vert}\phi(t)dt)\leq $

\begin{equation}
\left \{ 
\begin{array}{l}
F\Big(\psi(\int_{0}^{\Vert Sx-Sy\Vert}\phi(t)dt), \varphi(\int_{0}^{\Vert
Sx-Sy\Vert}\phi(t)dt)\Big) \ \ \ \ \ \ \ \ \ \ \ \ \ \ \ \ \ \ if\ y=u, \\ 
\\ 
F\Big(\psi(\int_{0}^{\frac{1}{\Phi(t)}\gamma(m_{4}(x,y))}\phi(t)dt),
\varphi(\int_{0}^{\frac{1}{\Phi(t)}\gamma(m_{4}(x,y))}\phi(t)dt)\Big) \ \ \
\ \ \ \ if\ y\in P_{M}(u),%
\end{array}
\right.  \label{eq5.22}
\end{equation}

for all function $\Phi:(0,1)\rightarrow (0,1)$, where $\psi \in \Psi,
\varphi \in \Phi_{u}, F\in \mathcal{C}$, such that $(\psi ,\varphi ,F)$ is
monotoneThen $P_{M}(u)$ is $T$-invariant. Suppose that $P_{M}(u)$ is closed
and has a contractive jointly continuous family $\Im=\{f_{x}:x\in M\}$. If
the order pair $(S,T)$ be a generalized $\mathcal{JH\Im}$-suboperator pair
with order $n_{0} $ on $P_{M}(u)$, then $P_{M}(u)\cap \text{Fix}(S)\cap 
\text{Fix}(T)\neq\emptyset$, provided one of the ollowing conditions holds:

\begin{itemize}
\item[(5a)] $cl(T(P_{M}(u)))$ is compact, $S$ and $T$ are continuous on $%
P_{M}(u)$;

\item[(5b)] $P_{M}(u)$ is bounded, $T$ is continuous and $S$ is hemicompact;

\item[(5c)] $wcl(T(P_{M}(u)))$ is weakly compact, $S$ is weakly continuous
and $(S-T)$ is demiclosed at 0.
\end{itemize}
\end{theorem}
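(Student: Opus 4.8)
The plan is to reduce the statement to Theorem~\ref{t4.9} in two stages: first I would show that $P_{M}(u)$ is $T$-invariant, using only the branch $y=u$ of~\eqref{eq5.22}, and then I would invoke Theorem~\ref{t4.9} on the ambient set $P_{M}(u)$ itself, matching the hypotheses one by one.

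For the first stage I follow the boundary argument of Theorem~\ref{t5.1}. Fix $x\in P_{M}(u)$, so that $\|x-u\|=d(u,M)$. Then $\|kx+(1-k)u-u\|=k\|x-u\|<d(u,M)$ for every $k\in(0,1)$, so the open segment $\{kx+(1-k)u:k\in(0,1)\}$ misses $M$ and hence $x\in\partial M\cap M$. The hypothesis $T(\partial M\cap M)\subseteq M$ then gives $Tx\in M$. To bound $\|Tx-u\|$ I would apply the branch $y=u$ of~\eqref{eq5.22} to the pair $(x,u)$: the $C$-class axiom $F(s,t)\leq s$ collapses it to $\psi(\int_{0}^{\Vert Tx-Tu\Vert}\phi(t)\,dt)\leq\psi(\int_{0}^{\Vert Sx-Su\Vert}\phi(t)\,dt)$, and since $\psi$ is non-decreasing while $\int_{0}^{\epsilon}\phi(t)\,dt$ is strictly increasing in $\epsilon$ by~\eqref{eq2.1}, this forces $\Vert Tx-Tu\Vert\leq\Vert Sx-Su\Vert$. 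Because $u\in\text{Fix}(S)\cap\text{Fix}(T)$ and $Sx\in P_{M}(u)$, this reads $\Vert Tx-u\Vert\leq\Vert Sx-u\Vert\leq d(u,M)$, whence $Tx\in P_{M}(u)$. Thus $P_{M}(u)$ is $T$-invariant.

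For the second stage I put $N:=P_{M}(u)$ and verify that $N$ meets every hypothesis of Theorem~\ref{t4.9}. It is closed by assumption; the identity $S(N)=N$ supplies the requirement $M=S(M)$ of that theorem; $N$ carries the contractive jointly continuous family $\Im$; and $(S,T)$ is a generalized $\mathcal{JH\Im}$-suboperator pair with order $n_{0}$ on $N$. Moreover, for $x,y\in N$ the second branch of~\eqref{eq5.22} is exactly inequality~\eqref{eq4.17}, with the same quantity $m_{4}(x,y)$ and the same factor $1/\Phi(t)$, and $(\psi,\varphi,F)$ is monotone. Finally, conditions (5a), (5b), (5c) correspond respectively to conditions (3a), (3b), (3c) of Theorem~\ref{t4.9} after relabelling the ambient set. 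Hence Theorem~\ref{t4.9}, applied with its set taken to be $N=P_{M}(u)$, produces a point of $N\cap\text{Fix}(S)\cap\text{Fix}(T)$, which is precisely the asserted $P_{M}(u)\cap\text{Fix}(S)\cap\text{Fix}(T)\neq\emptyset$.

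I expect the only genuinely delicate point to be the passage, in the first stage, from the integral inequality to the pointwise bound $\Vert Tx-u\Vert\leq\Vert Sx-u\Vert$; this rests jointly on the $C$-class axiom $F(s,t)\leq s$, the monotonicity of $\psi$, and the strict positivity~\eqref{eq2.1}, and it is exactly what makes $P_{M}(u)$ $T$-invariant. Once invariance is in hand, the second stage is purely a bookkeeping check that the present hypotheses are a relabelling of those of Theorem~\ref{t4.9}, so no further analytic work is required.
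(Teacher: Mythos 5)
Your two-stage plan coincides with the paper's own proof: the paper establishes $T$-invariance of $P_{M}(u)$ by exactly the boundary argument you describe (it cites the argument of Theorems \ref{t5.3}/\ref{t5.1}) and then applies Theorem \ref{t4.9} with the ambient set taken to be $P_{M}(u)$; your matching of hypotheses and of (5a)--(5c) with (3a)--(3c) is the bookkeeping the paper leaves implicit. So the architecture is correct and is the same as the paper's.

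However, the step you yourself single out as delicate is not sound as you justify it. First, \eqref{eq2.1} does not make $\epsilon\mapsto\int_{0}^{\epsilon}\phi(t)\,dt$ strictly increasing: it only forces positivity of the integral over $[0,\epsilon]$, and $\phi$ may vanish identically on, say, $[1,\infty)$, making the integral flat there. Second, and more importantly, $\psi\in\Psi$ is merely non-decreasing (for instance $\psi(t)=\min\{t,1\}$ is a legitimate altering distance function), so from $\psi(A)\leq\psi(B)$ one cannot infer $A\leq B$; flat pieces of $\psi$ or of the integral destroy the inversion. The invariance conclusion can nevertheless be rescued, and this is where the second $C$-class axiom must actually be used rather than only $F(s,t)\leq s$: set $A=\int_{0}^{\Vert Tx-Tu\Vert}\phi(t)\,dt$ and $B=\int_{0}^{\Vert Sx-Su\Vert}\phi(t)\,dt$, and suppose $\Vert Tx-Tu\Vert>\Vert Sx-Su\Vert$. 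Then $A\geq B$, so $\psi(B)\leq\psi(A)\leq F\bigl(\psi(B),\varphi(B)\bigr)\leq\psi(B)$, whence $F\bigl(\psi(B),\varphi(B)\bigr)=\psi(B)$; axiom (2) of Definition \ref{C-class} gives $\psi(B)=0$ or $\varphi(B)=0$, and either property (since $\psi\in\Psi$, $\varphi\in\Phi_{u}$) forces $B=0$, hence $\Vert Sx-Su\Vert=0$ by \eqref{eq2.1}. Then $d(u,M)=\Vert Sx-u\Vert=0$, so $x=u$ and $Tx=Tu=u$, contradicting $\Vert Tx-Tu\Vert>0$. With this patch your first stage, and hence the whole argument, goes through. (For comparison, the paper's proof of Theorem \ref{t5.1} simply asserts $\Vert Tx-Tu\Vert\leq\Vert Sx-Su\Vert$ from the integral condition with no justification at all, so your attempt is more careful than the source precisely at the point where it slips.)
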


\begin{proof}
By same argument of Theorem \ref{t5.3} $P_{M}(u)$ is $T$-invariant. Since
all of the conditions of Theorem \ref{t4.9} are satisfied with $M=P_{M}(u)$,
then $P_{M}(u)\cap \text{Fix}(S)\cap \text{Fix}(T)\neq\emptyset$ under any
one of the conditions $(5a)$ to $(5c)$.
\end{proof}


\end{document}